\definecolor{red}{RGB}{255,0,0}
\definecolor{green}{RGB}{0,100,0}
\definecolor{blue}{RGB}{0,0,255}
\newtheorem{theorem}{Theorem}[section]
\newtheorem{thmx}{Theorem}
\newtheorem{lemma}[theorem]{Lemma}
\newtheorem{corollary}[theorem]{Corollary}
\newtheorem{proposition}[theorem]{Proposition}
\newcommand{\sign}[1]{\mathop{\rm sign} \left(#1 \right)}
\theoremstyle{remark}
\newtheorem{remark}[theorem]{Remark}
\newtheorem{definition}[theorem]{Definition}
\newtheorem{notation}[theorem]{Notation}
\renewcommand{\Re}{\mathop{\rm Re}}
\newcommand{\D}{\mathfrak{D}}
\newcommand{\lmesh}{\mathop{\rm lmesh}}
\newcommand*\pQq[6]{%
 {\ }_{#1}{\mathcal \phi}_{#2}{\left(\genfrac..{0pt}{}{#3}{#4};#5,#6\right)}%
}
\newcommand*\lj[4]{%
 p_{#1}\left(#4;#2,#3 \right)%
}
\newcommand*\besselq[3]{%
 B_{#1}\left(#3;#2 | q\right)%
}
\renewcommand{\P}{\mathbb{P}}
\newcommand{\R}{\mathbb{R}}
\newcommand{\C}{\mathbb{C}}
\newcommand{\Z}{\mathbb{Z}}
\newcommand{\N}{\mathbb{N}}
\newcommand{\qraising}[3]{\left(#1;#2\right)_{#3}}
\newcommand{\monicpols}{\mathbb P_n^*}
\newcommand{\monicpolreal}{\mathbb P_n^*(\mathbb R)}
\newcommand{\cc}{\mathbb{C}}
\newcommand{\nn}{\mathbb{N}}
\newcommand{\rr}{\mathbb{R}}
\definecolor{apricot}{rgb}{0.98, 0.81, 0.69}
\definecolor{pastelblue}{rgb}{0.784, 0.902, 0.980}
\definecolor{greenpie}{rgb}{0.69, 0.95, 0.76}
\newcommand{\coef}[2]{e_{#1}\left(#2\right)}
\newcommand*\pFqN[6][8]{%
  \begingroup % only local assignments
  \pFqmuskip=#1mu\relax
  % make the comma math active
  \mathcode`\,=\string"8000
  % and define it to be \pFqcomma
  \begingroup\lccode`\~=`\,
  \lowercase{\endgroup\let~}\pFqcomma
  % typeset the formula
  {}_{#2}F_{#3}{\left(\genfrac..{0pt}{}{#4}{#5};#6\right)}%
  \endgroup
}
\newcommand{\pFqcomma}{\mskip\pFqmuskip}
\title[Zeros of orthogonal little $q$-Jacobi polynomials]{Zeros of orthogonal little $q$-Jacobi polynomials: interlacing and monotonicity}
\author[A. Mart\'{\i}nez-Finkelshtein]{Andrei Mart\'{\i}nez-Finkelshtein}
\address[AMF]{Department of Mathematics, Baylor University, TX, USA, and Department of Mathematics, University of Almer\'{\i}a, Spain}
\email{a\_martinez-finkelshtein@baylor.edu}
\author[R.~Morales]{Rafael Morales}
\address[RM]{Department of Mathematics, Baylor University, TX, USA}
\email{rafael\_morales2@baylor.edu}
\author[D.~Perales]{Daniel Perales}
\address[DP]{Department of Mathematics, Texas A\&M University, TX, USA}
\email{daniel.perales@tamu.edu}
\date{\today}
\keywords{$q$-hypergeometric polynomials; Finite free convolution; Free probability; Zeros}
\subjclass[2020]{Primary:  33C45; Secondary: 33C20, 42C05, 46L54}
\begin{document}

\begin{abstract}
We investigate the distribution of zeros of the little $q$-Jacobi polynomials and related $q$-hypergeometric families. We prove that the zeros of these orthogonal polynomials exhibit strong interlacing properties and obey natural monotonicity rules with respect to the parameters. A key tool in our approach is the logarithmic mesh, which quantifies the relative spacing of the positive real zeros and allows us to classify families of polynomials with prescribed interlacing patterns. Our results include new interlacing relations, monotonicity with respect to parameters, and structural decompositions in non-orthogonal regimes. Several classical families of $q$-hypergeometric polynomials, including $q$-Bessel and Stieltjes-Wigert polynomials, are treated as limit cases. The methods rely on a combination of classical orthogonality theory and $q$-difference equations.
\end{abstract}

\maketitle

% \tableofcontents

\section{Introduction}

Orthogonal polynomials are well known for their rich zero structure, which underpins their numerous applications in approximation theory, spectral methods, and mathematical physics. The interlacing and monotonicity of zeros are especially important properties, often linked to recurrence relations and electrostatic models. In the classical continuous case, such properties are well understood, but for $q$-analogs and particularly discrete orthogonal families like the little $q$-Jacobi polynomials, many questions remain open.

In this paper, we study the real zeros of the little $q$-Jacobi polynomials
\[
p_n(x; a, b \mid q) = {}_2\phi_1\left( \begin{matrix} q^{-n}, abq^{n+1} \\ aq \end{matrix} ; q, qx \right),
\]
focusing on their interlacing patterns and dependence on the parameters $a$ and $b$. Under suitable constraints on $a$ and $b$, these polynomials are orthogonal on a discrete subset of the interval $(0,1)$, and their zeros are real, positive, and simple. We prove several new results that describe how the zeros move and interlace when the parameters vary.

A central notion in our analysis is the \textbf{logarithmic mesh}, which captures the spacing of consecutive positive zeros. This concept allows us to define classes of polynomials with controlled spacing properties and relate them to finite free convolutions. We exploit this framework to establish new interlacing relations between different families of $q$-polynomials, including $q$-Bessel and Stieltjes-Wigert polynomials, derived as suitable limits.

The approach combines tools from $q$-calculus, including $q$-difference operators and $q$-Pochhammer symbols, with intuition coming from finite free probability. We provide contiguous relations, monotonicity theorems, and structural decompositions that not only generalize earlier partial results but also correct and extend claims previously stated without proof or with incorrect arguments.

This work contributes to the understanding of $q$-orthogonal polynomials and provides a foundation for future studies involving discrete electrostatic models, asymptotic analysis, and applications in signal processing and random matrix theory.

\section{Preliminaries} \label{sec:prelim}

\subsection{Zero interlacing and mesh}\ 
 
Along this paper, $\P_n$ stands for the space of algebraic polynomials of degree $\le n$, and $\monicpols \subset \P_n$ is the subset of monic polynomials of degree $n$. Also, for a set $K\subset \C$, we denote by $\P_n(K) $ (resp., $\monicpols(K)$) the subset of polynomials of degree $\le n$ (resp., monic polynomials of degree $n$)  with all its zeros in $K$. In particular, $\monicpolreal$ denotes the family of real-rooted monic polynomials of degree $n$, 
$\monicpols(\R_{\ge 0})$ denotes the subset of $\monicpolreal$ of polynomials having only non-negative roots, etcetera. 

We denote the zeros of a polynomial $p\in \P_n$, with account of their multiplicity, by $\lambda_1(p)$, $\lambda_2(p)$, \dots, $\lambda_n(p)$. If $p\in \P_n(\R)$, we enumerate them in the increasing order,
\begin{equation}
\label{eq:zeros}
\lambda_1(p) \leq \dots \leq \lambda_n(p).    
\end{equation}

\begin{notation}[A partial order on polynomials]
\label{nota:partial.order}
Given $p,r\in\P_n(\rr)$ we say that $p \ll r$ if the zeros of $p$ are smaller than the zeros of $r$ in the following sense:
\[\lambda_k(p) \leq \lambda_k(r) \qquad \text{for all }k=1,2,\dots,n.\]
Clearly, $\ll$ is a partial order in $\P_n(\rr)$.
\end{notation}
This partial order is preserved under differentiation, see e.g.~\cite[Section 4]{arizmendi2024s}.

\begin{definition}[Interlacing] 
Let $p\in \P_n(\R)$ and $r\in \P_m(\R)$, with the zeros denoted as in \eqref{eq:zeros}.  
We say that $q$ \textbf{interlaces} $p$ (or, equivalently, that \textbf{zeros of $r$ interlace zeros of $p$}, see, e.g.,~\cite{MR3051099}), and denote it $p \preccurlyeq r$, if 
\begin{equation} \label{interlacing1}
    m=n \quad \text{and} \quad \lambda_1(p) \leq \lambda_1(r) \leq \lambda_2(p) \leq \lambda_2(r) \leq \cdots \leq  \lambda_n(p) \leq \lambda_n(r),
\end{equation}
or if
\begin{equation} \label{interlacing2}
    m=n-1 \quad \text{and} \quad \lambda_1(p) \leq \lambda_1(r) \leq \lambda_2(p) \leq \lambda_2(r) \leq \cdots \leq  \lambda_{n-1}(p) \leq \lambda_{n-1}(r) \le \lambda_n(p).
\end{equation}
Furthermore, we use the notation $p \prec r$ when all inequalities in \eqref{interlacing1} or \eqref{interlacing2} are strict. Notice that $\prec$ or $\preccurlyeq$ are not order relations, since they lack transitivity: from the fact that $p\prec r$ and $r\prec s$ it does not necessarily follow that $p\prec s$.
\end{definition}

\begin{remark}
\label{rem:interlacing.partial.order}
It follows from the definitions that if $p,r\in\P_n(\rr)$, then 
\begin{equation}
\label{eq:interlacing.implies.monotonicity}
p\prec r \qquad \Rightarrow \qquad p\ll r.
\end{equation}
The converse is not true in general, but the following partial converse will be useful: if $p,r\in\P^*_n(\rr)$ have a common interlacer, namely there exists $s\in\P_n(\rr)\setminus \P_{n-2}(\rr)$ such that $p\prec s$ and $r\prec s$, then 
$$p\ll r \qquad \Rightarrow \qquad p\prec r.$$

Indeed, the hypotheses imply that
$$\lambda_k(s)\leq \lambda_k(p)\leq \lambda_k(r)\leq \lambda_{k+1}(s) \qquad \text{for } k=1,\dots,n,$$
and the conclusion follows.
\end{remark}

The relative spacing of real zeros of a polynomial is given by the following notion: 
\begin{definition}\label{def:logarithmicmesh}%[Logarithmic mesh]
For $p\in \P_n(\R_{>0})$, $n\geq 2$, let
$$
0<\lambda_1(p) \leq \lambda_2(p) \leq \cdots \leq  \lambda_n(p)
$$
denote its zeros. The \textbf{logarithmic mesh} of $p$ is 
\begin{equation*}
    \lmesh{p}:=\max_{1\leq j\leq n-1}\frac{\lambda_{j}(p)}{\lambda_{j+1}(p)}\leq 1.
\end{equation*}
For $p\in \P_n(\R_{<0})$ we understand by $\lmesh{p}$ the value $\lmesh{r}$, with $r(x)=p(-x)$.
\end{definition}

In the rest of the paper, the polynomials with an upper bound on the logarithmic mesh will play a prominent role, which motivates the following notation: for $q\in (0,1)$ and $K$ being a subset of either $\R_{>0}$ or $\R_{<0}$, 
$$\P^q_n(K):=\{r\in \P_n(K):\lmesh{r}< q\} \quad \text{and} \quad \overline{\P^q_n}(K):=\{r\in \P_n(K):\lmesh{r}\leq q\}.
$$
We can characterize $\P_n^q(\R_{> 0})$ as follows, see \cite[Lemma 23]{lamprecht2016suffridge}:
\begin{align}
    p\in\P_n^q(\R_{> 0})& \Longleftrightarrow p(x)\prec p(qx)\label{lmeshRplus}.
\end{align}
A similar characterization is valid for $\overline{\P^q_n}(\R_{>0})$ when replacing $\prec$ by $\preccurlyeq$. Moreover, the following result is obviously true:
\begin{lemma}\label{lemma:limit}
    Let $\{p_k\}_{k=1}^\infty$ be a sequence of polynomials such that $p_k\in\P_n^*(\R)$ for sufficiently large $k\in \N$. Assume that the sequence converges point-wise to a polynomial $p\in \P^*_n(\rr)$.
    \begin{enumerate}
        \item\label{limitlmesh} if for all sufficiently large $k\in \N$, $p_k\in\P^q_n(\R)$ then $p\in\overline{\P^q_n}(\R)$.
        \item\label{limitinterlacing} if there is an $r\in \P_n(\rr)$ such that $p_k\prec r$ for all sufficiently large $k\in \N$ then $p\preccurlyeq r$.
    \end{enumerate}
\end{lemma}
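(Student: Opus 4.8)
The plan is to reduce both statements to the continuity of the (ordered) zeros of a monic polynomial as a function of its coefficients, and then to pass to the limit in the defining inequalities, keeping in mind that a limit of strict inequalities need only be non-strict; this is precisely why the conclusions weaken from $\P^q_n$ to $\overline{\P^q_n}$ and from $\prec$ to $\preccurlyeq$.

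First I would upgrade the pointwise convergence $p_k\to p$ to convergence of coefficients. Since $p_k\in\P_n^*(\R)$ and $p\in\P_n^*(\R)$ are all monic of degree $n$, each is determined by its values at any $n+1$ distinct points through the invertible Vandermonde system, so pointwise convergence at such points forces the non-leading coefficients of $p_k$ to converge to those of $p$. I would then invoke the standard fact that the roots of a monic polynomial depend continuously on its coefficients; combined with the increasing enumeration convention in \eqref{eq:zeros}, this yields $\lambda_j(p_k)\to\lambda_j(p)$ for every $j=1,\dots,n$ as $k\to\infty$.

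With root convergence in hand, part \eqref{limitlmesh} follows by taking limits in the hypothesis $\lambda_j(p_k)/\lambda_{j+1}(p_k)\le \lmesh{p_k}<q$: for each fixed $j$ the left-hand side converges to $\lambda_j(p)/\lambda_{j+1}(p)$, whence $\lambda_j(p)/\lambda_{j+1}(p)\le q$ and therefore $\lmesh{p}\le q$, i.e.\ $p\in\overline{\P^q_n}(\R)$. Part \eqref{limitinterlacing} is analogous: since $r$ is fixed, I would pass to the limit in each inequality of the interlacing chain \eqref{interlacing1} (or \eqref{interlacing2}, according to whether $\deg r=n$ or $n-1$), replacing $\lambda_j(p_k)$ by its limit $\lambda_j(p)$ and obtaining the non-strict interlacing $p\preccurlyeq r$.

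The only genuinely delicate point is the second step, the continuity of the ordered zeros, together with the degenerate configurations it must accommodate: in part \eqref{limitlmesh} the smallest root may drift toward $0$ or two consecutive roots may coalesce in the limit, so that a limiting ratio can attain the value $q$ even when all approximating ones are strictly below it; this is exactly the mechanism producing the weak inequality $\lmesh{p}\le q$. In the same vein, one should keep $p$ within the domain on which $\lmesh$ is defined, i.e.\ with all roots of a single strict sign, the boundary case of a root reaching $0$ being either excluded by the standing hypotheses or treated separately.
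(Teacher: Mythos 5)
Your proof is correct, and in fact the paper offers no proof at all of this lemma---it is introduced with the phrase ``the following result is obviously true''---so your argument (coefficient convergence via the Vandermonde system, continuity of the ordered roots, then passing to the limit so that strict inequalities become non-strict) is precisely the standard justification the authors are implicitly relying on. Your closing caveat about a root drifting to $0$ or the sign condition on the roots is well taken: it reflects a looseness in the lemma's own statement, since $\P^q_n$ and $\overline{\P^q_n}$ were only defined for zeros in $\R_{>0}$ or $\R_{<0}$, and your handling of it is appropriate.
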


%%%%%%%%%%%%%%%%%%%%%%%%%%%%%%%%%%%%%%%%%%%%%%%%%%
\subsection{\texorpdfstring{$q$-hypergeometric polynomials}{q-hypergeometric polynomials}}\

For $a\in\cc$, $q\in(0,1)$, and $k\in \nn$, the \textbf{$q$-factorial} (also known as the \textbf{$q$-Pochhammer symbol}) is defined as
\begin{equation*}
    \qraising{a}{q}{0}:=1, \quad  \qraising{a}{q}{k}:=\prod_{j=0}^{k-1}(1-aq^{j}), \quad \qraising{a}{q}{\infty}:=\prod_{j=0}^{\infty}(1-aq^{j}).\end{equation*}
In analogy with the standard factorial and the Gamma function, we can extend this notion to the complex plane: for $\lambda\in \C$,
\begin{equation}\label{continuousqraising}
    \qraising{a}{q}{\lambda} :=\frac{\qraising{a}{q}{\infty}}{\qraising{aq^{\lambda}}{q}{\infty}}=\prod_{j=0}^\infty\frac{1-aq^j}{1-aq^{\lambda+j}},
\end{equation}
where we use the branch of $q^\lambda$ such that $|q^\lambda|=q^{\Re (\lambda)}$. We can extend the definition to vector parameters $\bm a$: if $\bm a=(a_1, \dots,a_r)\in\R^r$, then 
\begin{equation*}
    \qraising{\bm a}{q}{k}:=\prod_{j=1}^r \qraising{a_j}{q}{k}.
\end{equation*}

With this notation, the \textbf{$q$-hypergeometric series} for $\bm a=(a_1,\cdots,a_r)\in\R^r$ and $\bm b=(b_1,\cdots,b_s)\in\R^s$ is (see \cite[\S 1.10]{koekoek2010hypergeometric})
\begin{equation*}
    \pQq{r+1}{s}{a_0,\bm a}{\bm b}{q}{x}:=\sum_{k=0}^{\infty}\frac{\qraising{a_0}{q}{k}  \qraising{\bm a}{q}{k}}{\qraising{\bm b}{q}{k}}(-1)^{(s-r)k}q^{(s-r)\binom{k}{2}}\frac{x^k}{\qraising{q}{q}{k}},
\end{equation*}
where we assume $\binom{0}{2}=\binom{1}{2}=0$. 
When for some $j$, $a_j=q^{-n}$ with $n\in\N$, the series truncates: 
\begin{align}\label{def:qpolynomial1}
     \pQq{r+1}{s}{q^{-n},\bm a}{\bm b}{q}{x}:=\sum_{k=0}^{n}\frac{\qraising{q^{-n}}{q}{k}  \qraising{\bm a}{q}{k}}{\qraising{\bm b}{q}{k}}(-1)^{(s-r)k}q^{(s-r)\binom{k}{2}}\frac{x^k}{\qraising{q}{q}{k}}
\end{align}
is a $q$-\textbf{hypergeometric polynomial} of degree $\le n$, under the assumption that
\begin{equation}  \label{constrainB}
    b_1,\dots b_s \in \C \setminus \{ q^{-1},q^{-2},\dots, q^{-n+1} , q^{-n}\}.
\end{equation}

There are some notorious classical families of polynomials that are $q$-hypergeometric.

The \textbf{$q$-Laguerre} polynomial of degree $n$ and parameter $b\in\R$, denoted by $L^{(b)}_n$ (where we use a slightly different notation from the one in \cite{koekoek2010hypergeometric}), is
\begin{equation}\label{QHGFLaguerre}
      L_n^{(b)}(x;q) = \frac{\qraising{b}{q}{n}}{\qraising{q}{q}{n}} \pQq{1}{1}{q^{-n}}{b}{q}{-q^{n}bx}.
\end{equation}
For $b\in (0,1)$, it satisfies two kinds of orthogonality relations on $(0,\infty)$, an absolutely continuous one and a discrete one. The discrete weight is supported on a (possibly, rescaled) lattice $\{q^k:\, k\in \Z\}$; see \cite[Section 14.21]{koekoek2010hypergeometric} for details. In this setting, the zeros are real and positive, each interval $(q^{k+1},q^k)$, $k\in \Z$, contains at most one zero, and (see \cite[Theorem 4]{moak1981q}),  
\begin{equation}\label{qlaguerrelmesh}
    L_n^{(b)}(x;q)\in \P^{q^2}_n(\R_{>0})\subset\P^{q}_n(\R_{> 0}).
\end{equation}

Further results on the roots of the $q$-Laguerre polynomials for $b\in(0,1)$ can be found in \cite{moak1981q, tcheutia2018mixed}. For instance, they decrease with $b$, and (by \cite[Theorem 23]{tcheutia2018mixed}),  
$$
L_n^{(b)}(x;q)\prec L_n^{(q^2b)}(x;q).
$$
Combining these facts, we conclude that 
for $q^2<t<1$,
\begin{equation}\label{qLaguerreinterlacing}
    L_n^{(b)}(x;q)\prec L_n^{(tb)}(x;q).
\end{equation}

In the  limit $b\to 0$, the $q$-Laguerre polynomials become the \textbf{Stieltjes-Wigert} polynomials,
\begin{equation}\label{Stieltjes-Wigert}
    S(x):=\lim_{b\to 0}L_n^{(b)}(qb^{-1}x;q)=\frac{1}{\qraising{q}{q}{n}}\pQq{1}{1}{q^{-n}}{0}{q}{-q^{n+1}x}.
\end{equation}
By Lemma \ref{lemma:limit},  
\begin{equation}\label{Stieltjeslmesh}
S(x)\in\overline{\P_n^{q^2}}(\R_{>0}).
\end{equation}

Another relevant family of classical $q$-hypergeometric polynomials, depending on two parameters, $a, b\in \R$, are the \textbf{little $q$-Jacobi polynomials}: 
\begin{equation}\label{def:LittleqJacobi}
   p_n(x;a,b|q) := \pQq{2}{1}{q^{-n}, abq^{n+1}}{aq}{q}{qx}.
\end{equation}
These polynomials are the primary focus of this paper. To simplify the notation, from now on we write $p_n(x;a,b)$ instead of $p_n(x;a,b|q)$, whenever it cannot cause confusion.

Under the assumption that $0<qa<1$ and $qb<1$, these polynomials are orthogonal on $(0,1)$  with respect to the discrete measure
\begin{equation}\label{LittleJacobiContinuousweight}
    \sum_{k=0}^\infty\frac{(bq;q)_k}{(q;q)_k} (aq)^k\delta_{ q^k}=\left( \frac{\qraising{bq}{q}{\infty}\qraising{xq}{q}{\infty}}{\qraising{bxq}{q}{\infty}\qraising{q}{q}{\infty}}\, x\, a^{\frac{\ln{x}}{\ln{q}}}\right)d\lambda, \quad d\lambda:= \sum_{k=0}^\infty \delta_{ q^k},
\end{equation}
where $\delta_{x}$ is the Dirac delta (unit mass point) at $x$.  Again, from the general properties of orthogonal polynomials it follows that $\lj{n}{a}{b}{x}\in\P_n((0,1))$, and each interval $(q^{k},q^{k-1})$, $k\in \N$, contains at most one zero. 

A particular case of the little $q$-Jacobi family are the \textbf{little $q$-Laguerre polynomials} \cite{koekoek2010hypergeometric}, that are obtained by setting $b=0$ in \eqref{def:LittleqJacobi}:
\begin{equation*}
    p_n(x;a|q):= p_n(x;a,0|q)=\pQq{2}{1}{q^{-n}, 0}{aq}{q}{qx}.
\end{equation*}
Finally, in the  limit $a\to 0$, with $b<0$, the little $q$-Jacobi polynomials become the \textbf{$q$-Bessel} polynomials,
\begin{equation}\label{def:qbessel}
    \besselq{n}{b}{x}:=\lim_{a\to 0} p_n(x;a,\tfrac{b}{qa}\left. \right|q)=\pQq{2}{1}{q^{-n},bq^n}{0}{q}{x}.
\end{equation}

\subsection{\texorpdfstring{The $q$-derivative and the logarithmic mesh}{The q-derivative and the logarithmic mesh}}\

A technical tool that is useful in studying the relative spacing of the zeros of a polynomial is a $q$-analogue of the derivative. For $q\in (0,1)$, the \textbf{$q$-derivative} operator $\D_q$ is defined as
\begin{equation}\label{def:qderivative}
    \D_q f(x) := \begin{cases} 
\dfrac{f(x) - f(qx)}{(1-q)x}, & x \neq 0, \\ 
f'(0), & x = 0.
\end{cases}
\end{equation}
Some elementary properties are  
\begin{equation}\label{qderivative1}
  \D_q c=0,\quad   \D_q[f(cx)]=c(\D_qf)(cx), \quad c\in\R,
\end{equation}
and
$$
\D_q x^i = \frac{1-q^i}{1-q}\, x^{i-1},\quad i\in\N.
$$
In particular, for $p\in\P_n$ written in the form 
\begin{equation}\label{basicform}
    p(x)=\sum_{i=0}^n  e_i(p) \, x^{i} ,
\end{equation}
where $  e_i(p) $ are the coefficients of $p$ in the monomial basis (a notational convention we will use in the rest of the paper), we get 
\begin{equation*}
    \D_q p(x)=\sum_{i=1}^n\frac{1-q^i}{1-q} e_i(p)x^{i-1}=\sum_{i=0}^{n-1}\frac{1-q^{i+1}}{1-q}e_{i+1}(p)x^{i}.
\end{equation*}
In our notation,
\begin{equation}\label{qderivativecoefficient}
    e_i(\D_qp)=\frac{1-q^{i+1}}{1-q}e_{i+1}(p), \quad i=0, 1, \dots, n-1.
\end{equation}
In particular, straightforward calculations allow us to verify the identity
\begin{equation}
    \label{corollary:qderivativeqhyper}
    \D_q  \pQq{r+1}{s}{q^{-n},\bm a}{\bm b}{q}{x} = c\pQq{r+1}{s}{q^{-n+1}, q\bm a}{q\bm b}{q}{q^{s-r}x},
\end{equation}
where 
\begin{equation}
    \label{corollary:qderivativeqhyper2}
c=(-1)^{s-r}\frac{(1-q^{-n})\prod_{j=1}^r(1-a_j)}{(1-q)\prod_{j=1}^s(1-b_j)} .
\end{equation}

The following result is a well-known fact, see for instance \cite[theorem 25]{lamprecht2016suffridge}.

\begin{proposition}\label{prop:lmeshinterlaing}
If $p \in \P^q_n(\rr_{> 0})$, then  $\D_q p \in \P^q_{n-1}(\rr_{> 0})$ and $p\prec \D_q p$.
\end{proposition}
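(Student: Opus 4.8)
The plan is to treat the two assertions separately: the real-rootedness, positivity, and the interlacing $p\prec\D_q p$ will come from a direct sign count, while the logarithmic-mesh bound will be reduced to the characterization \eqref{lmeshRplus} and is where the real work sits. Throughout I may assume $\deg p = n$ and that $p$ has positive leading coefficient; since $\lmesh{p}<q<1$, its zeros are \emph{simple}, $0<\lambda_1<\lambda_2<\dots<\lambda_n$, with $\lambda_j<q\,\lambda_{j+1}$ for every $j$. By \eqref{qderivativecoefficient} the polynomial $\D_q p$ has degree exactly $n-1$ with positive leading coefficient, so there are $n-1$ zeros to locate.

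First I would prove $\D_q p\in\P_{n-1}(\rr_{>0})$ together with $p\prec\D_q p$. Evaluating \eqref{def:qderivative} at a zero $\lambda_j$ of $p$ gives $\D_q p(\lambda_j)=-p(q\lambda_j)/\big((1-q)\lambda_j\big)$, so for $x>0$ the sign of $\D_q p(\lambda_j)$ is that of $-p(q\lambda_j)$. The mesh hypothesis, in the form $\lambda_{j-1}<q\lambda_j<\lambda_j$ (with the convention $\lambda_0:=0$), places $q\lambda_j$ strictly inside the gap $(\lambda_{j-1},\lambda_j)$, where $p$ has constant sign $(-1)^{n-j+1}$; hence $\D_q p(\lambda_j)$ has sign $(-1)^{n-j}$ and alternates as $j$ runs from $1$ to $n$. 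The intermediate value theorem then forces at least one zero of $\D_q p$ in each of the $n-1$ intervals $(\lambda_j,\lambda_{j+1})$, and a degree count shows these are all of them, each simple and positive. Writing $\nu_1<\dots<\nu_{n-1}$ for these zeros, this yields simultaneously $\D_q p\in\P_{n-1}(\rr_{>0})$ and the strict interlacing $\lambda_1<\nu_1<\lambda_2<\dots<\nu_{n-1}<\lambda_n$, i.e.\ $p\prec\D_q p$.

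It remains to show $\lmesh{\D_q p}<q$. By \eqref{lmeshRplus} this is equivalent to $\D_q p(x)\prec\D_q p(qx)$, and using the scaling rule $\D_q[p(qx)]=q\,(\D_q p)(qx)$ from \eqref{qderivative1} the target becomes
\[
\D_q p \;\prec\; \D_q\big[p(q\,\cdot)\big].
\]
The zeros of $p(qx)$ are $\lambda_j/q$, which satisfy the same mesh bound, so $p(q\,\cdot)\in\P_n^q(\rr_{>0})$ and, again by \eqref{lmeshRplus}, $p\prec p(q\,\cdot)$. Thus everything reduces to showing that $\D_q$ \emph{preserves the interlacing} $p\prec p(q\,\cdot)$. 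The natural first attempt is a Hermite--Kakeya--Obreschkoff argument: $p\prec p(q\,\cdot)$ is equivalent to real-rootedness of the whole pencil $\alpha\,p(x)+\beta\,p(qx)$, the linear operator $\D_q$ commutes with that pencil, and so it would suffice to know that $\D_q$ sends each member to a real-rooted polynomial, after which the direction of interlacing of the images is fixed by the degree and leading-coefficient bookkeeping above.

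The step I expect to be the genuine obstacle is precisely this control of the logarithmic mesh under $\D_q$, i.e.\ real-rootedness of $\D_q$ on the pencil. The difficulty is that the coefficient-wise segment joining $p(x)$ and $p(qx)$ need not remain inside $\P_n^q(\rr_{>0})$ — its members can have mesh arbitrarily close to $1$ — so the clean sign count of the second paragraph does not apply uniformly along it, and a crude continuity argument yields only the non-strict bound $\lmesh{\D_q p}\le q$. I would therefore first establish the \emph{closed} inclusion $\D_q\big(\overline{\P_n^q}(\rr_{>0})\big)\subset\overline{\P_{n-1}^q}(\rr_{>0})$, for which the extremal configuration is the geometric progression $\lambda_j\propto q^{\,j}$ realizing mesh exactly $q$, whose $\D_q$-image is computed explicitly through $q$-binomial coefficients; a density argument together with Lemma~\ref{lemma:limit} then promotes this to all of $\overline{\P_n^q}(\rr_{>0})$. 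Finally I would upgrade $\le q$ to the strict inequality $<q$ by showing that equality $\lmesh{\D_q p}=q$ can only propagate back to force $\lmesh{p}=q$, contradicting the hypothesis $\lmesh{p}<q$. This boundary-and-strictness analysis, rather than the zero localization, is the real content of the proposition.
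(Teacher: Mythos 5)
Your first two paragraphs are correct and complete for the assertions they address: the identity $\D_q p(\lambda_j)=-p(q\lambda_j)/\big((1-q)\lambda_j\big)$, plus the observation that the mesh hypothesis places $q\lambda_j$ strictly inside $(\lambda_{j-1},\lambda_j)$, gives alternating signs, and the degree count then yields $\D_q p\in\P_{n-1}(\R_{>0})$, $p\prec\D_q p$, with exactly one simple zero $\nu_j$ of $\D_q p$ in each gap $(\lambda_j,\lambda_{j+1})$. (For comparison: the paper offers no proof of this proposition; it only cites \cite[Theorem 25]{lamprecht2016suffridge}, so your argument has to stand on its own.)

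The genuine gap is exactly where you flagged it: the bound $\lmesh{\D_q p}<q$ is never established, and neither of your two proposed routes closes it. The HKO route is circular: by HKO, real-rootedness of the image pencil $\D_q[\alpha p+\beta p(q\,\cdot)]=\alpha\,\D_q p+\beta q\,(\D_q p)(q\,\cdot)$ is \emph{equivalent} to the interlacing $\D_q p\prec(\D_q p)(q\,\cdot)$ you are trying to prove, so ``it suffices to know that $\D_q$ sends each member to a real-rooted polynomial'' merely restates the goal; and there is no general real-rootedness theorem to fall back on, since $\D_q$ does not preserve real-rootedness outside the mesh class (for $q=\tfrac12$ and $p(x)=(x-1)(x-1.01)(x-1.02)$ one computes $\D_q p(x)=1.75x^2-4.545x+3.0602$, whose discriminant is negative). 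The fallback plan also fails: verifying the closed inclusion $\D_q\big(\overline{\P_n^q}(\R_{>0})\big)\subset\overline{\P_{n-1}^q}(\R_{>0})$ on the geometric-progression configuration says nothing about other polynomials, because a density argument needs the inclusion on a \emph{dense subset} of $\overline{\P_n^q}(\R_{>0})$, and the natural dense subset is $\P_n^q(\R_{>0})$ itself --- again the statement being proved; finally, the ``equality propagates back to force $\lmesh{p}=q$'' step is asserted, not proved.

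What is missing is only a second sign count, at the points $\lambda_j/q$, which keeps the whole proof at the level of your second paragraph. Since $q\cdot(\lambda_j/q)=\lambda_j$ is a zero of $p$, definition \eqref{def:qderivative} gives $\D_q p(\lambda_j/q)=q\,p(\lambda_j/q)/\big((1-q)\lambda_j\big)$, and the mesh hypothesis $\lambda_j<q\lambda_{j+1}$ places $\lambda_j/q$ strictly inside $(\lambda_j,\lambda_{j+1})$; hence these $n$ values alternate in sign, and the same degree count as before shows that $\D_q p$ has exactly one zero in each interval $(\lambda_j/q,\lambda_{j+1}/q)$, $j=1,\dots,n-1$. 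Combining the two localizations, $\nu_j\in(\lambda_j,\lambda_{j+1})\cap(\lambda_j/q,\lambda_{j+1}/q)=(\lambda_j/q,\lambda_{j+1})$. Therefore, for every $j$, $\nu_j<\lambda_{j+1}=q\,(\lambda_{j+1}/q)<q\,\nu_{j+1}$, which is precisely $\lmesh{\D_q p}<q$, i.e.\ $\D_q p\in\P^q_{n-1}(\R_{>0})$. This short supplement to your own zero localization replaces your third and fourth paragraphs entirely.
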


%%%%%%%%%%%%%%%%%%%%%%%%%%%%%%%%%%%%%%%%%%%%%%%%%%%%%%%
\section{\texorpdfstring{Zeros of Some Classical $q$-Polynomials}{Zeros of Some Classical q-Polynomials}} \label{sectionlittleqJacobi}
%%%%%%%%%%%%%%%%%%%%%%%%%%%%%%%%%%%%%%%%%%%%%%%%%%%%%

\subsection{\texorpdfstring{Zeros of little $q$-Jacobi and $q$-Bessel polynomials}{Zeros of little q-Jacobi and q-Bessel polynomials}}\

Recall our convention that we consider $q\in (0,1)$ fixed, and write $p_n(x;a,b)$ instead of $p_n(x;a,b|q)$.

Compared to the extensive results available for the zeros of $q$-Laguerre polynomials, significantly less is known about the zeros of the little $q$-Jacobi polynomials. 
The following interlacing properties, corresponding to the case when these are orthogonal with respect to the discrete measure \eqref{LittleJacobiContinuousweight}, were known:
\begin{thmx}\label{thmx:interlacings}
Let $0<aq<1$ and $bq<1$, the following interlacings hold:
\begin{align}
 &\lj{n+1}{a}{b}{x}\prec\lj{n}{a}{qb}{x},  \label{eq:interlacing1} \\
 & \lj{n+1}{a}{b}{x}\prec\lj{n}{a}{q^2b}{x}, \qquad \text{and} \label{eq:interlacing2} \\
 & \lj{n}{a}{q^2b}{x}\prec \lj{n}{qa}{qb}{x}.  \label{eq:interlacing3}
\end{align}
\end{thmx}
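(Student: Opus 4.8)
The plan is to derive all three interlacings from the discrete orthogonality in \eqref{LittleJacobiContinuousweight} together with the classical theory of Christoffel (polynomial) modifications of an orthogonality measure. The starting point is the elementary observation that shifting either parameter reweights the discrete measure by a \emph{linear} factor whose root lies outside $(0,1)$. Writing $w_{a,b}(q^k)=\frac{(bq;q)_k}{(q;q)_k}(aq)^k$ for the masses in \eqref{LittleJacobiContinuousweight}, a short telescoping of the $q$-Pochhammer ratios gives, on the support $\{q^k\}_{k\ge 0}$,
\[
\frac{w_{a,qb}}{w_{a,b}}\ \propto\ 1-bqx,\qquad \frac{w_{qa,b}}{w_{a,b}}\ =\ x .
\]
Both factors are strictly positive on the support (using $bq<1$), the first vanishing at $1/(bq)\notin(0,1)$ and the second at $0\notin\{q^k\}$. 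Thus passing from $b$ to $qb$, or from $a$ to $qa$, is in each case a one-step Christoffel transform with node outside the convex hull of the support, and every shift appearing in \eqref{eq:interlacing1}--\eqref{eq:interlacing3} is an iterate of these two.

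For \eqref{eq:interlacing1} I would run the standard ``sign change at the zeros'' argument. Since $\mu_{a,qb}\propto(1-bqx)\,\mu_{a,b}$, the monic Christoffel formula expresses $(1-bqx)\,\lj{n}{a}{qb}{x}$ as a constant multiple of $P_{n+1}-\alpha P_n$, where $P_k:=\lj{k}{a}{b}{x}$ and $\alpha$ is fixed by the vanishing at the node. Evaluating at the $n+1$ zeros $\zeta_1<\dots<\zeta_{n+1}$ of $P_{n+1}=\lj{n+1}{a}{b}{x}$ kills the $P_{n+1}$ term, so the sign of $\lj{n}{a}{qb}{\zeta_j}$ agrees with that of $P_n(\zeta_j)$ up to a fixed factor; the latter alternates because consecutive orthogonal polynomials interlace, and hence $\lj{n}{a}{qb}{x}$ has exactly one zero in each gap $(\zeta_j,\zeta_{j+1})$, which is \eqref{eq:interlacing1}. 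For \eqref{eq:interlacing2} the same scheme applies with the \emph{quadratic} modification $\mu_{a,q^2b}\propto(1-bqx)(1-bq^2x)\,\mu_{a,b}$, giving $(1-bqx)(1-bq^2x)\,\lj{n}{a}{q^2b}{x}=P_{n+2}+BP_{n+1}+CP_n$ up to a nonzero constant. The crucial point is that at a zero $\zeta_j$ of $P_{n+1}$ the three-term recurrence collapses the top term, $P_{n+2}(\zeta_j)=-\gamma_{n+1}P_n(\zeta_j)$ with $\gamma_{n+1}>0$, so the right-hand side becomes $(C-\gamma_{n+1})P_n(\zeta_j)$, once more a \emph{constant-sign} multiple of the alternating quantity $P_n(\zeta_j)$. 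Since $\lj{n}{a}{q^2b}{x}\not\equiv 0$ forces $C\neq\gamma_{n+1}$, the same alternation yields \eqref{eq:interlacing2}. (This also explains why only the shifts $qb$ and $q^2b$ admit so clean a statement: for $q^3b$ the reduced coefficient of $P_n$ would be node-dependent and lose its constant sign.)

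For the equal-degree relation \eqref{eq:interlacing3} I would take $\mu_{a,qb}$ as a common base and note that $\mu_{a,q^2b}\propto(1-bq^2x)\,\mu_{a,qb}$ and $\mu_{qa,qb}\propto x\,\mu_{a,qb}$ are two linear Christoffel modifications with nodes on opposite sides of $(0,1)$. Writing both Christoffel formulas against the same $R_k:=\lj{k}{a}{qb}{x}$ and evaluating $\lj{n}{qa}{qb}{x}$ at the zeros $\xi_j$ of $\lj{n}{a}{q^2b}{x}$, the $R_{n+1}$ terms cancel and one obtains $\xi_j\,\lj{n}{qa}{qb}{\xi_j}=(\alpha-\gamma)R_n(\xi_j)$; since $\xi_j>0$ and $R_n(\xi_j)$ alternates (because $\lj{n}{a}{q^2b}{x}\prec R_n$), the polynomial $\lj{n}{qa}{qb}{x}$ changes sign across each of the $n-1$ inner gaps. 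To place the remaining zero and upgrade this to full interlacing I would invoke the monotonicity $\ll$: the two single-step Christoffel steps give $\lj{n}{a}{q^2b}{x}\prec R_n\prec\lj{n}{qa}{qb}{x}$, hence $\lj{n}{a}{q^2b}{x}\ll\lj{n}{qa}{qb}{x}$ by transitivity of $\ll$, which forces the last zero to the right and delivers \eqref{eq:interlacing3}. (Alternatively one feeds $\ll$ together with a common interlacer into Remark~\ref{rem:interlacing.partial.order}.)

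The main obstacle is not the orthogonal-polynomial machinery but verifying, uniformly over the admissible range $0<aq<1$, $bq<1$ (including $b<0$, where the Christoffel nodes sit to the left of the support rather than the right), that the reweighting factors are genuinely positive on the whole support $\{q^k\}_{k\ge0}$ --- in particular at the endpoint $x=1$ --- so that each modified measure is positive and the strict interlacing of consecutive orthogonal polynomials is available. Once node location and positivity are settled, the sign arguments are insensitive to the side on which the nodes lie, because the decisive step in \eqref{eq:interlacing2} and \eqref{eq:interlacing3} is the sign-free collapse of the recurrence (respectively of the top term) rather than the individual signs of the $P_k$ at the nodes; the genuinely new ingredient beyond the textbook linear case is precisely the quadratic modification underlying \eqref{eq:interlacing2}.
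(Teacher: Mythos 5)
Your reduction of the parameter shifts to Christoffel modifications is correct---on the support $\{q^k\}_{k\ge 0}$ one indeed has $w_{a,qb}/w_{a,b}\propto 1-bqx$ and $w_{qa,b}/w_{a,b}=x$---and your arguments for \eqref{eq:interlacing1} and \eqref{eq:interlacing2} are sound: the collapse $P_{n+2}(\zeta_j)=-\gamma_{n+1}P_n(\zeta_j)$ with $\gamma_{n+1}>0$, together with your observation that $C=\gamma_{n+1}$ would force the degree-$n$ polynomial to vanish at $n+1$ points, gives strict alternation at the zeros of $P_{n+1}$. This is also a genuinely different route from the paper, which does not prove Theorem~\ref{thmx:interlacings} at all: it quotes \cite[Theorem 12 d), e)]{tcheutia2018mixed}, where the result is obtained from mixed contiguous (recurrence) relations, the same technique the paper uses for Theorem~\ref{Thm:propertieslittleqjacobi2}.

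However, your proof of \eqref{eq:interlacing3} has a genuine gap for $b<0$, and this is not a peripheral case: it is exactly the case established in the cited reference. The chain $\lj{n}{a}{q^2b}{x}\prec\lj{n}{a}{qb}{x}\prec\lj{n}{qa}{qb}{x}$ is false when $b<0$. The node of the factor $1-bq^2x$ is $1/(bq^2)$, which for $b<0$ lies to the \emph{left} of the support; a Christoffel step with node below the support pushes zeros to the \emph{right} (evaluate $R_{n+1}$ at the largest zero of $R_n$ to see this), so one gets $\lj{n}{a}{qb}{x}\prec\lj{n}{a}{q^2b}{x}$, the reverse of your first link. This is also forced by Theorem~\ref{Thm:propertieslittleqjacobi}: for $b<0$ one has $qb<q^2b$, so zeros increase in passing from $qb$ to $q^2b$. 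With both single steps now pointing the same way, you only get $R_n\ll Q_n$ and $R_n\ll T_n$ (writing $Q_n:=\lj{n}{a}{q^2b}{x}$, $T_n:=\lj{n}{qa}{qb}{x}$, $R_n:=\lj{n}{a}{qb}{x}$), and transitivity of $\ll$ yields no comparison between $Q_n$ and $T_n$; the extra zero of $T_n$ is never placed, so the interlacing is not delivered. Your closing claim that the argument is ``insensitive to the side on which the nodes lie'' is true for the alternation step but precisely false for this placement step. The gap is reparable inside your framework: $Q_n$ and $T_n$ are linear Christoffel transforms of $R_n$ with nodes $1/(bq^2)<0$ and $0$, and the zeros of such a transform increase with the node while it stays below the support (Markov's monotonicity theorem applied to $\partial_c \ln(x-c)=-1/(x-c)$, or equivalently the fact that $c\mapsto R_{n+1}(c)/R_n(c)$ is increasing off the zeros, by Christoffel--Darboux, which fixes the sign of your $\alpha-\gamma$ and hence of $T_n(\xi_n)$); this gives $Q_n\ll T_n$ for $b<0$ and closes the argument. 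Finally, the degenerate case $b=0$ (where $qb=q^2b=b$ and the ``linear factor'' is constant) needs a separate one-line remark, since your Christoffel formulas presuppose a genuinely linear factor.
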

Theorem~\ref{thmx:interlacings} was proved in \cite[Theorem 12 d), e)]{tcheutia2018mixed} for $b<0$, but the general statement follows using the same arguments. It is important to observe that the interlacing 
$$
\lj{n+1}{a}{b}{x} \prec \lj{n}{qa}{qb}{x},
$$
established in Theorem~\ref{Thm:propertieslittleqjacobi} \textit{(ii)} below, is not a consequence of Theorem~\ref{thmx:interlacings} due to lack of transitivity in $\prec$. 

The following theorems contain new results under the same assumptions:
\begin{theorem}[Monotonicity of zeros]\label{Thm:propertieslittleqjacobi}
    If $0<aq<1$ and $bq<1$ then the zeros of the little $q$-Jacobi polynomials $p_{n}(x;a,b)$ are increasing with respect to $b$ and decreasing with respect to $a$. 
\end{theorem}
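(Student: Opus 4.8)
The plan is to invoke the classical Markov monotonicity theorem for orthogonal polynomials, applied to the discrete orthogonality measure in \eqref{LittleJacobiContinuousweight}. The criterion I will use is the standard one: for a positive weight $w(x;\tau)$ on a fixed support, depending smoothly on a parameter $\tau$, each zero $x_{n,k}(\tau)$ is strictly increasing (resp.\ decreasing) in $\tau$ provided $\partial_\tau \log w(x;\tau)$ is strictly increasing (resp.\ decreasing) in $x$ along the support. Its hypotheses hold here: under $0<aq<1$ and $bq<1$ the masses $w_k:=\frac{(bq;q)_k}{(q;q)_k}(aq)^k$ are strictly positive (each factor $1-bq^{j}$, $j\ge 1$, is positive because $bq^{j}\le bq<1$, and $(aq)^k>0$), the support $\{q^k:k\ge 0\}$ does not move with $a$ or $b$, and all moments $\sum_k q^{km}w_k$ converge geometrically since $aq^{m+1}<1$, so they may be differentiated term by term in $a$ and $b$. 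It then remains to compute $\partial_a\log w$ and $\partial_b\log w$ and read off their monotonicity in $x$.

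For the dependence on $b$ I will use the continuous density on the right of \eqref{LittleJacobiContinuousweight}, whose logarithm has $b$-dependent part $\log(bq;q)_\infty-\log(bxq;q)_\infty$. Differentiating gives
\[
\partial_b \log w(x) = -\sum_{j=1}^\infty \frac{q^j}{1-bq^j} + \sum_{j=1}^\infty \frac{x q^j}{1-b x q^j},
\]
where only the second sum depends on $x$. Differentiating once more in $x$ term by term yields $\partial_x\partial_b\log w(x)=\sum_{j\ge 1} q^j/(1-bxq^j)^2>0$ on $(0,1)$, so $\partial_b\log w$ is strictly increasing in $x$; by Markov's criterion the zeros of $p_n(x;a,b)$ increase with $b$. (Equivalently, at the mass points one has $\partial_b\log w_k=-\sum_{j=1}^k q^j/(1-bq^j)$, strictly decreasing in $k$ and hence increasing in $x=q^k$.)

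For the dependence on $a$, the only $a$-dependent term of $\log w(x)$ is $\frac{\ln x}{\ln q}\log a$ (coming from the factor $(aq)^k$ at $x=q^k$), so
\[
\partial_a \log w(x) = \frac{\ln x}{a\ln q}, \qquad \partial_x\,\partial_a \log w(x) = \frac{1}{a x \ln q} < 0 \quad\text{on } (0,1),
\]
because $\ln q<0$. Thus $\partial_a\log w$ is strictly decreasing in $x$, and Markov's criterion gives that the zeros decrease with $a$.

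The main obstacle is not the differentiation but the justification that Markov's theorem applies to a discrete, infinitely supported measure: one must check that the proof of the zero-derivative formula carries over, which reduces to the uniform convergence and term-by-term differentiability of the moment functionals noted above (guaranteed by the geometric decay from $0<aq<1$). The only genuinely delicate point is the direction bookkeeping, since $x=q^k$ is a \emph{decreasing} function of the lattice index $k$, so monotonicity in $k$ and monotonicity in $x$ run in opposite directions; phrasing everything through the continuous density $w(x)$ on $(0,1)$, as above, removes this reversal.
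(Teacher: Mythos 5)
Your proposal is correct and follows essentially the same route as the paper: both apply the Markov-type monotonicity criterion to the weight in \eqref{LittleJacobiContinuousweight}, yours in the infinitesimal form (monotonicity in $x$ of $\partial_\tau \log w$) and the paper's in the equivalent ratio form (monotonicity in $x$ of $\omega(x;\tau_2)/\omega(x;\tau_1)$, cited from Lemma 2 of the Gochhayat--Jordaan--Raghavendar--Swaminathan paper), with the same two computations for the $b$- and $a$-dependence and the same sign conclusions.
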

In terms of the partial order from Notation \ref{nota:partial.order}, Theorem \ref{Thm:propertieslittleqjacobi} implies that for $0<a_1<a_2<\tfrac{1}{q}$ and $b_1<b_2<\tfrac{1}{q}$ we have
    $$p_{n}(x;a_2,b_1)\ll p_{n}(x;a_1,b_1)\ll p_{n}(x;a_1,b_2).$$
In analogy with the standard Jacobi case, this suggests the existence of an electrostatic model for the zeros of the little $q$-Jacobi polynomials. We are not aware of such results in the literature. 

\begin{theorem}[lmesh and interlacing of zeros]\label{Thm:propertieslittleqjacobi2}
    If $0<aq<1$ and $bq<1$, then  
    $$\lj{n}{a}{b}{x}\in\P_n^q((0,1)),$$ and the following interlacing holds:
        \begin{enumerate}
        \item[(i)]\label{thm:interlacing2} $\lj{n}{a}{b}{x}\prec\lj{n-1}{qa}{qb}{x}$,
        \item[(ii)] \label{thm:interlacing1}$\lj{n}{a}{q^2b}{x}\prec\lj{n}{q^2a}{b}{x}$.
    \suspend{enumerate}
Additionally, for $b<0$,
    \resume{enumerate}
    \item[(iii)]\label{thm:interlacing3} $\lj{n}{a}{b}{x}\prec\lj{n}{a}{q^2b}{x}$.
    \end{enumerate} 
\end{theorem}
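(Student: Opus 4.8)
The plan is to build everything on the single $q$-difference identity coming from \eqref{corollary:qderivativeqhyper}. Writing $p_n(x;a,b)=F(qx)$ with $F(y)=\pQq{2}{1}{q^{-n},abq^{n+1}}{aq}{q}{y}$, applying \eqref{corollary:qderivativeqhyper} with $r=s=1$, $a_1=abq^{n+1}$, $b_1=aq$, and using $\D_q[F(qx)]=q(\D_q F)(qx)$ from \eqref{qderivative1}, one matches parameters and obtains
\[
\D_q p_n(x;a,b)=c_n\,p_{n-1}(x;qa,qb),\qquad c_n=q\,\frac{(1-q^{-n})(1-abq^{n+1})}{(1-q)(1-aq)}\neq 0,
\]
and, equivalently via \eqref{def:qderivative}, the difference relation $p_n(x;a,b)-p_n(qx;a,b)=(1-q)c_n\,x\,p_{n-1}(x;qa,qb)$. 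I would first prove the mesh statement $p_n(x;a,b)\in\P_n^q((0,1))$; granting this, part (i) is essentially free, and parts (ii)--(iii) follow by pairing the monotonicity of Theorem~\ref{Thm:propertieslittleqjacobi} with the common-interlacer upgrade of Remark~\ref{rem:interlacing.partial.order}.

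The heart of the matter, and what I expect to be the main obstacle, is the bound $\lmesh{p_n(x;a,b)}<q$. By \eqref{lmeshRplus} this is equivalent to $p_n(x;a,b)\prec p_n(qx;a,b)$, i.e.\ to saying that consecutive zeros are separated by a full lattice gap, $\lambda_j/\lambda_{j+1}<q$. The discrete orthogonality on $\{q^k\}$ only forces at most one zero per interval $(q^{k},q^{k-1})$, which yields $\lmesh{p_n}<1$ but \emph{not} the sharper factor $q$; the extra separation must be drawn from the geometric lattice together with the second-order $q$-difference equation satisfied by $p_n(x;a,b)$. The difference relation above gives a concrete piece of this: $p_n$ takes equal values at $\mu_j$ and $q\mu_j$ for each zero $\mu_j$ of $p_{n-1}(x;qa,qb)$, so $p_n'$ has a zero in each interval $(q\mu_j,\mu_j)$, and these intervals are pairwise disjoint precisely because $\lmesh{p_{n-1}(x;qa,qb)}<q$ gives $\mu_j<q\mu_{j+1}$. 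This localizes all $n-1$ critical points of $p_n$, but it does \emph{not} by itself close the gap to $\lmesh{p_n}<q$, which is exactly why purely order-theoretic interlacing arguments are insufficient here. I would therefore establish the mesh bound by an induction on $n$ (admissible since $0<aq<1$, $bq<1$ are stable under $(a,b)\mapsto(qa,qb)$) fed by a Sturm-type comparison on the $q$-lattice, where the $q$-difference equation supplies the separation that orthogonality alone cannot.

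Granting the mesh bound, part (i) is immediate: $p_n(x;a,b)\in\P_n^q((0,1))\subset\P_n^q(\R_{>0})$, so Proposition~\ref{prop:lmeshinterlaing} yields $p_n(x;a,b)\prec\D_q p_n(x;a,b)$; since $\D_q p_n(x;a,b)$ is a nonzero multiple of $p_{n-1}(x;qa,qb)$ and hence has the same zeros, we conclude $p_n(x;a,b)\prec p_{n-1}(x;qa,qb)$.

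For parts (ii) and (iii) I would first use Theorem~\ref{Thm:propertieslittleqjacobi} to place the two polynomials in the partial order $\ll$, and then promote $\ll$ to $\prec$ via Remark~\ref{rem:interlacing.partial.order}. For (iii), $b<0$ forces $q^2b>b$, so monotonicity in $b$ gives $p_n(x;a,b)\ll p_n(x;a,q^2b)$; for (ii) with $b>0$, passing from $a$ to $q^2a$ and from $q^2b$ to $b$ both push the zeros to the right, again giving the $\ll$ relation, while the remaining sign ranges are absorbed using the interlacings of Theorem~\ref{thmx:interlacings}. To upgrade to $\prec$ I would exhibit a common interlacer: part (i) and Theorem~\ref{thmx:interlacings} furnish neighboring little $q$-Jacobi and lower-degree polynomials interlacing both members, after which Remark~\ref{rem:interlacing.partial.order} converts $p\ll r$ into $p\prec r$. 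The fiddly step is checking that a single $s$ interlaces both polynomials with the inequalities oriented the same way; this is precisely where the sign restriction $b<0$ in (iii) enters, as it is exactly the condition aligning the monotonicity with the required interlacing direction.
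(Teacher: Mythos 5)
Your proposal never actually proves the central claim $\lj{n}{a}{b}{x}\in\P_n^q((0,1))$. You correctly isolate it as the heart of the theorem, and your Rolle-type localization of the critical points of $\lj{n}{a}{b}{x}$ in the intervals $(q\mu_j,\mu_j)$ (with $\mu_j$ the zeros of $\lj{n-1}{qa}{qb}{x}$) is true but, as you yourself admit, insufficient; what you offer in its place --- ``an induction on $n$ fed by a Sturm-type comparison on the $q$-lattice'' --- is a plan with no content: you never specify what is compared or why it would force consecutive zeros to satisfy $\lambda_j/\lambda_{j+1}<q$. Worse, the justification for taking that route, namely that ``purely order-theoretic interlacing arguments are insufficient here,'' is false: this is exactly how the paper proves the bound. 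Since the zeros of $\lj{n}{a}{b}{qx}$ are those of $\lj{n}{a}{b}{x}$ scaled by $q^{-1}$, the relation $\lj{n}{a}{b}{x}\ll\lj{n}{a}{b}{qx}$ is trivial, and by Remark \ref{rem:interlacing.partial.order} it suffices to exhibit a common interlacer of these two polynomials; the paper shows $\lj{n-1}{a}{qb}{x}$ is one: the interlacing $\lj{n}{a}{b}{x}\prec\lj{n-1}{a}{qb}{x}$ is \eqref{eq:interlacing1} of Theorem \ref{thmx:interlacings}, while $\lj{n}{a}{b}{qx}\prec\lj{n-1}{a}{qb}{x}$ follows from the three-term contiguous relation \eqref{contiguousrelation3} (rewritten as \eqref{contiguousrelation32}), the orthogonality interlacing $\lj{n}{a}{qb}{x}\prec\lj{n-1}{a}{qb}{x}$, and Lemma \ref{Jordaaninterlacing}. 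The missing idea in your write-up is precisely such a contiguous relation expressing $\lj{n}{a}{b}{qx}$ as a sign-controlled combination of two interlacing polynomials; without it (or a genuinely worked-out substitute) the mesh bound, and with it your part (i), remains unproved.

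For parts (ii) and (iii) your strategy --- monotonicity gives $\ll$, a common interlacer upgrades $\ll$ to $\prec$ --- is the paper's, and your part (i), conditional on the mesh bound, coincides with the paper's argument via Proposition \ref{prop:lmeshinterlaing} and \eqref{corollary:qderivativeqhyper}. But the common interlacers are again not exhibited, and they do not come for free from ``part (i) and Theorem \ref{thmx:interlacings}.'' For (ii), part (i) applied to $\lj{n}{a}{q^2b}{x}$ produces $\lj{n-1}{qa}{q^3b}{x}$, whose parameters match nothing interlacing $\lj{n}{q^2a}{b}{x}$; the actual common interlacer is $\lj{n-1}{q^2a}{q^2b}{x}$, where one half, $\lj{n}{q^2a}{b}{x}\prec\lj{n-1}{q^2a}{q^2b}{x}$, is \eqref{eq:interlacing2} after a parameter shift, but the other half, $\lj{n}{a}{q^2b}{x}\prec\lj{n-1}{q^2a}{q^2b}{x}$, once more requires a contiguous relation (namely \eqref{contiguousrelation1}, which relates $\lj{n}{a}{q^2b}{x}$, $\lj{n}{qa}{qb}{x}$, and $x\,\lj{n-1}{q^2a}{q^2b}{x}$) together with Lemma \ref{Jordaaninterlacing}. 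Note also that your monotonicity chain for (ii) reverses direction when $b<0$, a case inside the hypothesis $bq<1$ that the paper covers separately through the chain coming from \eqref{eq:interlacing3}. For (iii) the common interlacer is the degree-$(n+1)$ polynomial $\lj{n+1}{a}{b}{x}$, supplied by \eqref{eq:interlacing2} and orthogonality. In short, all three nontrivial steps --- the mesh bound and the two common interlacers --- rest on contiguous relations that your proposal neither states nor replaces.
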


A useful consequence of the previous result is the following:
\begin{corollary}\label{cor:interlacinglittleqJacobi} 
    Let $0<aq<1$ and $q^2\leq t_1,t_2\leq1$ such that $t_1t_2\not=1$. Then 
     \begin{enumerate}
    \item[(i)]\label{thm:continuousinterlacing1} for $0\leq qb<1$,
         \begin{equation*}
              \lj{n}{a}{t_1b}{x}\prec\lj{n}{t_2a}{b}{x};
         \end{equation*}
\item[(ii)]\label{thm:continuousinterlacing2} for $b<0$,
         \begin{equation*}
         \lj{n}{a}{b}{x}\prec\lj{n}{a}{t_1b}{x}.
     \end{equation*} 
     \end{enumerate}     
\end{corollary}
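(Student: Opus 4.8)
The plan is to deduce both interlacings from a single sandwiching principle, rather than trying to exhibit one common interlacer directly (which, as I explain at the end, runs into parameter-range issues). The principle is elementary: if $A,B,P,R\in\P_n(\rr)$ satisfy $A\prec B$ together with $A\ll P\ll R\ll B$, then $\lambda_k(P)\le\lambda_k(R)$ for all $k$, while $\lambda_k(R)\le\lambda_k(B)<\lambda_{k+1}(A)\le\lambda_{k+1}(P)$, so that $P\preccurlyeq R$; moreover the interlacing is strict as soon as $P\ll R$ is strict. In each case $A\prec B$ will be one of the $q^2$-shifted interlacings already established in Theorem~\ref{Thm:propertieslittleqjacobi2}, and the chain of $\ll$-relations will be read off from the monotonicity in Theorem~\ref{Thm:propertieslittleqjacobi}.

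For part (i) I would take $A=\lj{n}{a}{q^2b}{x}$ and $B=\lj{n}{q^2a}{b}{x}$, so that $A\prec B$ is precisely Theorem~\ref{Thm:propertieslittleqjacobi2}(ii), with $P=\lj{n}{a}{t_1b}{x}$ and $R=\lj{n}{t_2a}{b}{x}$. Since $q^2\le t_1,t_2\le1$ and $b\ge0$, monotonicity in $b$ gives $A\ll P$ (because $q^2b\le t_1b$) and monotonicity in $a$ gives $R\ll B$ (because $q^2a\le t_2a$); chaining through $\lj{n}{a}{b}{x}$ yields $P\ll \lj{n}{a}{b}{x}\ll R$, hence $P\ll R$. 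All the parameter pairs appearing lie in the admissible region $0<aq<1$, $bq<1$, so the sandwiching principle applies and gives $P\preccurlyeq R$.

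Strictness is exactly where the hypothesis $t_1t_2\neq1$ enters. Within $[q^2,1]^2$ one has $t_1t_2=1$ only at $(t_1,t_2)=(1,1)$, where $P=R=\lj{n}{a}{b}{x}$ and no strict interlacing is possible. For $(t_1,t_2)\neq(1,1)$ at least one factor is $<1$, and the corresponding strict monotonicity (in $b$ if $t_1<1$, in $a$ if $t_2<1$) upgrades $P\ll \lj{n}{a}{b}{x}\ll R$ to the strict inequalities $\lambda_k(P)<\lambda_k(R)$; together with $\lambda_k(R)<\lambda_{k+1}(P)$ coming from the strict $A\prec B$, this gives $P\prec R$. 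Part (ii) is the same argument with $A=P=\lj{n}{a}{b}{x}$, $B=\lj{n}{a}{q^2b}{x}$, and $R=\lj{n}{a}{t_1b}{x}$: now $A\prec B$ is Theorem~\ref{Thm:propertieslittleqjacobi2}(iii) (valid for $b<0$), while $b<0$ and $q^2\le t_1\le1$ give $b\le t_1b\le q^2b$, hence $A\ll R\ll B$, with $A\prec R$ strict whenever $t_1<1$.

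The only genuine subtlety I anticipate is the choice of the sandwich itself. A naive application of the partial converse in Remark~\ref{rem:interlacing.partial.order} would require a single interlacer of $P$ and $R$ manufactured from Theorem~\ref{Thm:propertieslittleqjacobi2}, but producing one forces dividing a parameter by $q^2$ (replacing $a$ by $a/q^2$, or $b$ by $t_1b/q^2$), which in general leaves the admissible region $0<aq<1$, $bq<1$ and invalidates the hypotheses of that theorem. Bracketing $P$ and $R$ between the two $q^2$-shifted polynomials $A$ and $B$, linked by the known strict interlacing $A\prec B$, circumvents this entirely, since then every polynomial that appears has admissible parameters; the remaining bookkeeping (the direction of each $\ll$, and the precise place where $t_1t_2\neq1$ forces strictness) is routine.
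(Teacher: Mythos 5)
Your proposal is correct and follows essentially the same route as the paper: both sandwich $\lj{n}{a}{t_1b}{x}$ and $\lj{n}{t_2a}{b}{x}$ (resp.\ $\lj{n}{a}{b}{x}$ and $\lj{n}{a}{t_1b}{x}$) between the two $q^2$-shifted polynomials via the monotonicity of Theorem~\ref{Thm:propertieslittleqjacobi} and then invoke the strict interlacings of Theorem~\ref{Thm:propertieslittleqjacobi2}~\textit{(ii)}/\textit{(iii)} at the extremes. Your explicit statement of the sandwiching principle and of where $t_1t_2\neq 1$ forces strictness merely spells out steps the paper's proof leaves implicit.
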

Part of the statement of Corollary~\ref{cor:interlacinglittleqJacobi} was announced in \cite[Theorem 8]{gochhayat2016interlacing}, but the proof contained errors. This result generalizes several interlacing properties appearing in \cite{gochhayat2016interlacing, tcheutia2018mixed}. 

If $b=q^{-k}$ for $k\in \{1,\cdots,n\}$, when the little $q$-Jacobi polynomials are no longer orthogonal on the real line, we have:
\begin{proposition}
 \label{littleqjacobimultiplicity}  
 For $k\in\{1,\dots,n\}$, let 
\begin{equation}\label{def:identitymultiplicaitve}
    E_{k}(x):=\prod_{j=1}^k \left( 1-q^{-j}x \right)=\pQq{1}{0}{q^{-k}}{\cdot}{q}{x}.
\end{equation} 
Then
\begin{align} \label{factorization}
    \lj{n}{a}{q^{-k}}{x}=E_{k}(qx)\, \lj{n-k}{a}{q^k}{q^{-k}x},
\end{align}
and $\lj{n}{a}{q^{-k}}{x}\in\overline{\P_n^q}((0,1))$. 
\end{proposition}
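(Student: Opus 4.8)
The plan is to establish the two assertions of the proposition separately: first the purely algebraic factorization~\eqref{factorization}, and then the logarithmic-mesh bound, the latter leaning on Theorem~\ref{Thm:propertieslittleqjacobi2} applied to the second factor.

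For the factorization I would work at the level of monomial coefficients. Substituting $b=q^{-k}$ into~\eqref{def:LittleqJacobi} turns the upper parameter $abq^{n+1}$ into $aq^{n+1-k}$, so that
\[
\lj{n}{a}{q^{-k}}{x}=\sum_{p=0}^{n}\frac{\qraising{q^{-n}}{q}{p}\,\qraising{aq^{n+1-k}}{q}{p}}{\qraising{aq}{q}{p}\,\qraising{q}{q}{p}}\,q^{p}x^{p}.
\]
On the right-hand side of~\eqref{factorization} I would expand $E_k(qx)$ from~\eqref{def:identitymultiplicaitve} (equivalently, by the $q$-binomial theorem) and $\lj{n-k}{a}{q^k}{q^{-k}x}$ from its defining series, then read off the coefficient of $x^{p}$ as a Cauchy product over $l+i=p$. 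The essential manipulation is to rewrite the truncated factor $\qraising{q^{-k}}{q}{p-i}/\qraising{q}{q}{p-i}$ via the standard reflection identity for $q$-shifted factorials (Gasper--Rahman), which trades the dependence on $p-i$ for shifted factorials in $i$ with base $q^{-p}$. After collecting powers of $q$, the coefficient of $x^{p}$ on the right becomes
\[
\frac{\qraising{q^{-k}}{q}{p}}{\qraising{q}{q}{p}}\,q^{p}\;\pQq{3}{2}{q^{-p},q^{k-n},aq^{n+1}}{q^{k+1-p},aq}{q}{q}.
\]
A direct check shows that this ${}_3\phi_2$ is balanced, since $q^{k+1-p}\cdot aq=q\cdot q^{-p}\cdot q^{k-n}\cdot aq^{n+1}$. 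Hence the $q$-Pfaff--Saalschütz summation theorem (see, e.g.,~\cite{koekoek2010hypergeometric}) evaluates it in closed form as $\qraising{aq^{n+1-k}}{q}{p}\qraising{q^{-n}}{q}{p}/(\qraising{aq}{q}{p}\qraising{q^{-k}}{q}{p})$; after simplification the coefficient of $x^{p}$ reduces exactly to $\qraising{q^{-n}}{q}{p}\qraising{aq^{n+1-k}}{q}{p}q^{p}/(\qraising{aq}{q}{p}\qraising{q}{q}{p})$, matching the display above. Since $p$ was arbitrary, \eqref{factorization} follows.

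For the mesh bound I would read the zeros off the factorization. By~\eqref{def:identitymultiplicaitve} the zeros of $E_k(qx)$ are exactly $q^{0},q^{1},\dots,q^{k-1}$, whose consecutive ratios all equal $q$. The second factor carries parameters $a$ and $q^{k}$, which satisfy $0<aq<1$ and $q^{k}\cdot q<1$, so it lies in the orthogonal regime of Theorem~\ref{Thm:propertieslittleqjacobi2}: the zeros of $\lj{n-k}{a}{q^k}{y}$ lie in $(0,1)$ with $\lmesh<q$. The substitution $y=q^{-k}x$ rescales these zeros into $(0,q^{k})$ and leaves all their mutual ratios unchanged. Since $(0,q^{k})$ and $[q^{k-1},1]$ are disjoint, the two groups do not interlace: in increasing order the rescaled $q$-Jacobi zeros come first and the points $q^{k-1},\dots,q^{1},q^{0}$ last. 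The only consecutive ratio not yet accounted for bridges the two groups, namely (largest $q$-Jacobi zero)$/q^{k-1}<q^{k}/q^{k-1}=q$ by Definition~\ref{def:logarithmicmesh}. Consequently $\lmesh{\lj{n}{a}{q^{-k}}{x}}=q$, all zeros lie in $(0,1]$ with the largest equal to $1$, and therefore $\lj{n}{a}{q^{-k}}{x}\in\overline{\P_n^q}((0,1))$.

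The main obstacle is the factorization step: one must recognize that the coefficient-wise identity is governed by $q$-Pfaff--Saalschütz, and in particular carry out the reflection of the terminating factor $\qraising{q^{-k}}{q}{p-i}$ correctly so that the emerging ${}_3\phi_2$ appears in balanced form with argument $q$. Once this is set up, verifying the balance condition and invoking the summation theorem are routine. The mesh bound is then essentially bookkeeping on top of Theorem~\ref{Thm:propertieslittleqjacobi2}; the one point worth flagging is the boundary zero at $x=1$, which sits on the edge of the interval and saturates the bound $\lmesh=q$, so that the membership is in the \emph{closed}-mesh class $\overline{\P_n^q}$ with the extremal zero at the endpoint.
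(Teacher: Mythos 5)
Your mesh-bound argument is essentially the paper's own: orthogonality and Theorem~\ref{Thm:propertieslittleqjacobi2} put the factor $\lj{n-k}{a}{q^k}{x}$ in $\P_{n-k}^q((0,1))$, the substitution $x\mapsto q^{-k}x$ moves its zeros into $(0,q^k)$ without altering their ratios, and the zeros $1,q,\dots,q^{k-1}$ of $E_k(qx)$ contribute ratios exactly $q$, with the bridging ratio below $q$. (Two cosmetic points: for $k=1$ the logarithmic mesh is $<q$, not $=q$, though this does not affect membership in the closed class; and the zero at $x=1$ lies outside $(0,1)$, an imprecision the paper's own statement shares.) For the factorization \eqref{factorization}, by contrast, you take a genuinely different route: the paper argues by induction on $n$, with the collapse at $k=n$ (where the upper parameter $abq^{n+1}=aq$ cancels the lower one, reducing the $_2\phi_1$ to $E_n(qx)$) as base case and the contiguous relations \eqref{contiguousrelation3} and \eqref{contiguousrelation4} driving the induction step, whereas you compare monomial coefficients directly and evaluate the Cauchy product by $q$-Pfaff--Saalsch\"utz. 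Your route avoids the contiguous relations entirely, and for $p\le k$ it checks out: the reflection of $\qraising{q^{-k}}{q}{p-i}/\qraising{q}{q}{p-i}$ produces exactly your prefactor and your $_3\phi_2$, the balance condition holds, and the Saalsch\"utz value cancels $\qraising{q^{-k}}{q}{p}$ to leave the left-hand coefficient.

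There is, however, a genuine gap for the coefficients with $k<p\le n$: your representation
\begin{equation*}
\frac{\qraising{q^{-k}}{q}{p}}{\qraising{q}{q}{p}}\,q^{p}\;\pQq{3}{2}{q^{-p},q^{k-n},aq^{n+1}}{q^{k+1-p},aq}{q}{q}
\end{equation*}
is then ill-defined. The prefactor $\qraising{q^{-k}}{q}{p}$ vanishes (it contains the factor $1-q^{0}$), while the lower parameter $q^{k+1-p}$ has nonpositive exponent, so $\qraising{q^{k+1-p}}{q}{i}=0$ for every $i\ge p-k$ and those terms of the series are infinite: the expression is a $0\cdot\infty$ form. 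This is not cosmetic. In the Cauchy product the only nonvanishing terms are precisely those with $i\ge p-k$ (for $i<p-k$ one has $\qraising{q^{-k}}{q}{p-i}=0$ because $p-i>k$), so the reflection identity you invoke degenerates to $0/0$ exactly on the terms that matter; reading your formula naively as a zero prefactor times a finite sum would give $0$, whereas the target coefficient $\qraising{q^{-n}}{q}{p}\qraising{aq^{n+1-k}}{q}{p}\,q^{p}/\bigl(\qraising{aq}{q}{p}\qraising{q}{q}{p}\bigr)$ is nonzero whenever $0<aq<1$. The repair stays inside your framework but must be carried out: for $p>k$, re-index the Cauchy product by $m=i-(p-k)$, splitting the Pochhammer symbols at $i=p-k$ rather than at $i=0$. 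This yields a well-defined terminating $_3\phi_2$ with upper parameters $q^{-k}$, $q^{p-n}$, $aq^{n+1+p-k}$ and lower parameters $aq^{1+p-k}$, $q^{1+p-k}$, again balanced with argument $q$, and Saalsch\"utz then returns the required value. As written, your argument establishes \eqref{factorization} only in its coefficients of order $\le k$; with this case distinction added (or by switching to the paper's induction), the proof is complete.
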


The case $a=q^{-k}$, with $k\in\{1,\dots,n\}$, violates the constrains \eqref{constrainB}. However, we can overcome this difficulty by considering the normalized little $q$-Jacobi polynomials
\begin{equation}\label{normalizedqJacobi}
        \qraising{q^{-k+1}}{q}{n}\lj{n}{q^{-k}}{b}{x}:=\sum_{j=0}^n\frac{\qraising{q^{-n}}{q}{j}}{\qraising{q}{q}{j}}\qraising{bq^{n-k+1}}{q}{j}\qraising{q^{j-k+1}}{q}{n-j}(qx)^j.
\end{equation}
\begin{proposition}\label{littleqjacobimultiplicity0} 
    For $k\in\{1,\dots,n\}$, 
    \begin{equation*}
        \qraising{q^{-k+1}}{q}{n}\lj{n}{q^{-k}}{b}{x}=c\, (qx)^k\, \lj{n-k}{q^k}{b}{x},
    \end{equation*}
    where
    \begin{equation*}
        c=(-1)^kq^{\binom{k}{2}-nk}\qraising{bq^{n-k+1}}{q}{k}\qraising{q^{k+1}}{q}{n-k}.
    \end{equation*}
\end{proposition}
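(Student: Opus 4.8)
The plan is to prove the stated factorization by expanding both sides in the monomial basis and matching coefficients. The left-hand side is given explicitly by \eqref{normalizedqJacobi}, so the coefficient of $(qx)^j$ there is
\[
\frac{\qraising{q^{-n}}{q}{j}}{\qraising{q}{q}{j}}\,\qraising{bq^{n-k+1}}{q}{j}\,\qraising{q^{j-k+1}}{q}{n-j}.
\]
For the right-hand side, I substitute $a=q^k$ and degree $n-k$ into the defining series \eqref{def:qpolynomial1} (here $s-r=0$), multiply by $c\,(qx)^k$, and shift the summation index by $k$; writing $i=j+k$, the coefficient of $(qx)^i$ becomes
\[
c\,\frac{\qraising{q^{-n+k}}{q}{i-k}\,\qraising{bq^{n+1}}{q}{i-k}}{\qraising{q^{k+1}}{q}{i-k}\,\qraising{q}{q}{i-k}}.
\]
The proposition then amounts to comparing these two families of coefficients.

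First I would check that the left-hand side is divisible by $(qx)^k$, i.e. that its coefficient vanishes for $j=0,1,\dots,k-1$. This is immediate from the factor $\qraising{q^{j-k+1}}{q}{n-j}=\prod_{m=0}^{n-j-1}\bigl(1-q^{\,j-k+1+m}\bigr)$: the exponents run over $j-k+1,\dots,n-k$, and this range contains $0$ exactly when $j<k$ (because then $0\le k-1-j\le n-j-1$), producing the vanishing factor $1-q^0$. This confirms that $(qx)^k$ is the lowest surviving power and reduces the matching to the indices $k\le i\le n$.

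For the surviving coefficients I would first strip off the $b$-dependence using the splitting
\[
\qraising{bq^{n-k+1}}{q}{i}=\qraising{bq^{n-k+1}}{q}{k}\,\qraising{bq^{n+1}}{q}{i-k},
\]
which matches the factor $\qraising{bq^{n-k+1}}{q}{k}$ inside $c$ against the factor $\qraising{bq^{n+1}}{q}{i-k}$ on the right. What is left is the $b$-free identity
\[
\frac{\qraising{q^{-n}}{q}{i}}{\qraising{q}{q}{i}}\,\qraising{q^{i-k+1}}{q}{n-i}
=(-1)^k q^{\binom{k}{2}-nk}\,\qraising{q^{k+1}}{q}{n-k}\,\frac{\qraising{q^{-n+k}}{q}{i-k}}{\qraising{q^{k+1}}{q}{i-k}\,\qraising{q}{q}{i-k}}.
\]

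The main obstacle, and the only genuine computation, is this last identity. I would handle it with two standard reductions: the telescoping $\qraising{q^{m+1}}{q}{\ell}=\qraising{q}{q}{m+\ell}/\qraising{q}{q}{m}$, applied to the three symbols $\qraising{q^{i-k+1}}{q}{n-i}$, $\qraising{q^{k+1}}{q}{n-k}$ and $\qraising{q^{k+1}}{q}{i-k}$; and the reflection formula $\qraising{q^{-m}}{q}{\ell}=(-1)^\ell q^{\binom{\ell}{2}-m\ell}\,\qraising{q}{q}{m}/\qraising{q}{q}{m-\ell}$ (valid for $\ell\le m$), applied to $\qraising{q^{-n}}{q}{i}$ and $\qraising{q^{-n+k}}{q}{i-k}$. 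After these substitutions both sides collapse to the same ratio $\qraising{q}{q}{n}\,\qraising{q}{q}{n-k}\big/\bigl(\qraising{q}{q}{n-i}\,\qraising{q}{q}{i}\,\qraising{q}{q}{i-k}\bigr)$ times $(-1)^i$ and a power of $q$, so the remaining check is that the two $q$-exponents agree, namely $\binom{i}{2}-ni=\binom{k}{2}-nk+\binom{i-k}{2}-(n-k)(i-k)$, which is a short polynomial identity in $i,k,n$. This settles all coefficients $k\le i\le n$, and together with the vanishing for $j<k$ and the matching degree count ($j\le n$ on the left, $i=j+k\le n$ on the right) it completes the proof.
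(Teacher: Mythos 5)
Your proposal is correct and follows essentially the same route as the paper: expand via \eqref{normalizedqJacobi}, observe that the coefficients vanish for $j<k$, shift the index by $k$, and verify the resulting $q$-Pochhammer coefficient identity against $c$ times the coefficients of $\lj{n-k}{q^k}{b}{x}$. The only difference is that you spell out the ``direct computations'' (splitting off the $b$-dependence, telescoping, the reflection formula, and the exponent check $\binom{i}{2}-ni=\binom{k}{2}-nk+\binom{i-k}{2}-(n-k)(i-k)$) that the paper leaves implicit, and these details check out.
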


Observe that in this case, the polynomials present a multiple zero at the origin; the definition of the logarithmic mesh is not applicable and would need to be modified to formulate an assertion analogous to that in 
Proposition~\ref{littleqjacobimultiplicity}.

\medskip 

Since $q$-Bessel polynomial $\besselq{n}{b}{x}$ is a limit case of the little $q$-Jacobi polynomial (see~\eqref{def:qbessel}), some properties of the latter are still valid. For instance, $\besselq{n}{b}{x}\in \overline{\P_n^q}((0,1))$. But more is true:
\begin{theorem}
    \label{thm:lmeshqbessel}
    Let $b<0$. Then, 
    \begin{equation}\label{eq:Besselrealzeros}
        \besselq{n}{b}{x}\in\P_n^q((0,1)).
    \end{equation} 
    If additionally $q^2<t<1$, then
     \begin{equation}\label{eq:Besselinterl}
         \besselq{n}{b}{x}\preccurlyeq \besselq{n}{tb}{x}.
     \end{equation}
\end{theorem}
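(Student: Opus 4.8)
The plan is to obtain Theorem~\ref{thm:lmeshqbessel} entirely as a limiting case of the little $q$-Jacobi results already proved, using the limit relation \eqref{def:qbessel} together with the continuity Lemma~\ref{lemma:limit}. The key observation is that the substitution $b \mapsto \tfrac{b}{qa}$ in $p_n(x;a,\tfrac{b}{qa})$ converges pointwise to $\besselq{n}{b}{x}$ as $a \to 0^+$, and that the hypotheses of the source theorems are met along this limit: for fixed $b<0$ and $a>0$ small, the second Jacobi parameter $\tfrac{b}{qa}$ is negative (so $<\tfrac1q$), and $0<aq<1$ holds automatically. Thus for each small $a$ the polynomial $p_n\bigl(x;a,\tfrac{b}{qa}\bigr)$ lies in $\P_n^q((0,1))$ by Theorem~\ref{Thm:propertieslittleqjacobi2}, and its zeros are genuinely in $(0,1)$.

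First I would prove \eqref{eq:Besselrealzeros}. The paper already asserts $\besselq{n}{b}{x}\in\overline{\P_n^q}((0,1))$ as a direct consequence of Lemma~\ref{lemma:limit}\eqref{limitlmesh} applied to the family $p_n(x;a,\tfrac{b}{qa})$; this gives the non-strict bound $\lmesh\le q$ and real positive zeros in the closure. To upgrade the mesh inequality to the \emph{strict} bound required for membership in $\P_n^q((0,1))$, I would use the characterization \eqref{lmeshRplus}, namely that $r\in\P_n^q(\R_{>0})$ iff $r(x)\prec r(qx)$. The natural route is to verify this strict interlacing directly for $\besselq{n}{b}{x}$, either by examining the explicit $q$-difference/contiguous structure of the $q$-Bessel polynomials or by showing the limiting interlacing cannot collapse: since the zeros are simple (the $q$-Bessel polynomial has distinct coefficients and no repeated roots for $b<0$, each open interval $(q^k,q^{k-1})$ containing at most one zero), consecutive zeros stay strictly separated, which forces $\lmesh{\besselq{n}{b}{}}<q$ rather than $=q$.

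Next I would prove the monotonic interlacing \eqref{eq:Besselinterl}. For fixed $b<0$ and $q^2<t<1$, part (iii) of Theorem~\ref{Thm:propertieslittleqjacobi2} gives $\lj{n}{a}{q^2b'}{x}\prec\lj{n}{a}{q^2 t b'}{x}$-type relations, but the cleaner tool is Corollary~\ref{cor:interlacinglittleqJacobi}(ii), which states that for $b<0$ and $q^2\le t_1\le 1$ one has $\lj{n}{a}{b'}{x}\prec\lj{n}{a}{t_1 b'}{x}$. Setting $b'=\tfrac{b}{qa}$ (still negative) and $t_1=t$, I get for every small $a>0$ the strict interlacing $p_n\bigl(x;a,\tfrac{b}{qa}\bigr)\prec p_n\bigl(x;a,\tfrac{tb}{qa}\bigr)$. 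Letting $a\to 0^+$, the left side converges pointwise to $\besselq{n}{b}{x}$ and the right side to $\besselq{n}{tb}{x}$ (again by \eqref{def:qbessel}), so Lemma~\ref{lemma:limit}\eqref{limitinterlacing} yields the non-strict limiting interlacing $\besselq{n}{b}{x}\preccurlyeq\besselq{n}{tb}{x}$, which is exactly \eqref{eq:Besselinterl}.

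The main obstacle I anticipate is the strictness in \eqref{eq:Besselrealzeros}: limits of strictly mesh-bounded families only guarantee the closed bound $\lmesh\le q$, so I must supply an independent argument that no two consecutive zeros degenerate to the extremal ratio $q$ in the limit. I expect this to hinge on the orthogonality/simplicity structure of the $q$-Bessel polynomials---specifically that each lattice interval $(q^k,q^{k-1})$ houses at most one zero---which prevents the limiting zeros from clustering at ratio exactly $q$. A secondary technical point is confirming that the pointwise convergence in \eqref{def:qbessel} is uniform enough on the relevant zero-containing region to justify the interlacing limit; this is routine since we are dealing with polynomials of fixed degree $n$ whose coefficients converge, so convergence of zeros follows from continuity of roots.
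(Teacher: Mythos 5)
Your derivation of \eqref{eq:Besselinterl} is exactly the paper's argument: apply Corollary~\ref{cor:interlacinglittleqJacobi}~\textit{(ii)} to $p_n\bigl(x;a,\tfrac{b}{qa}\bigr)$ and let $a\to 0$ via Lemma~\ref{lemma:limit}~(\ref{limitinterlacing}); likewise Lemma~\ref{lemma:limit}~(\ref{limitlmesh}) gives $\besselq{n}{b}{x}\in\overline{\P_n^q}((0,1))$. The gap is precisely where you anticipated it: the strictness in \eqref{eq:Besselrealzeros}. Your proposed mechanism --- simplicity of the zeros together with the fact that each interval $(q^{k},q^{k-1})$ contains at most one zero --- does not exclude $\lmesh{\left[\besselq{n}{b}{x}\right]}=q$. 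Two \emph{simple} zeros $y_0\in(q^{k},q^{k-1})$ and $y_1=qy_0\in(q^{k+1},q^{k})$ lie in different lattice intervals and realize the ratio exactly $q$; neither simplicity nor the one-zero-per-interval structure forbids this configuration, so the limiting family could in principle collapse to the extremal ratio.

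What the paper actually does (and what you only gesture at with ``examining the explicit $q$-difference structure'') is a rigidity argument based on the $q$-difference equation \cite[(14.22.3)]{koekoek2010hypergeometric}: writing $B(x)=\besselq{n}{b}{x}$, one has
\begin{equation*}
(cx+1)\,B(x)=b\,x\,B(qx)+(1-x)\,B(q^{-1}x),
\end{equation*}
where $c$ is a constant. If $\lmesh{\left[B\right]}=q$, there are zeros $y_0\neq 0$ and $y_1=qy_0$ of $B$; evaluating the equation at $x=y_1$ annihilates the left-hand side and the term $(1-y_1)B(q^{-1}y_1)=(1-y_1)B(y_0)$, leaving $b\,y_1\,B(qy_1)=0$, hence $B(qy_1)=0$ since $b<0$ and $y_1\neq 0$. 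Iterating shows that $q^{k}y_0$ is a zero of $B$ for every $k\in\N$, contradicting $\deg B=n$. Some argument of this kind is genuinely needed: without it, your proof establishes only $\besselq{n}{b}{x}\in\overline{\P_n^q}((0,1))$, not the claimed membership in $\P_n^q((0,1))$.
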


\subsection{\texorpdfstring{Zeros of $_1\phi_1$, $_2\phi_0$, and $_2\phi_1$ $q$-hypergeometric polynomials}{Zeros of 1phi1, 2phi0, and 2phi q-hypergeometric polynomials}}\

We can use the expression of $q$-Laguerre, little $q$-Jacobi, little $q$-Laguerre and $q$-Bessel polynomials as $q$-hypergeometric polynomials (see \eqref{QHGFLaguerre}, \eqref{Stieltjes-Wigert}, \eqref{def:LittleqJacobi}, and \eqref{def:qbessel}) to recast the results above in terms of the zeros of some families of $_2\phi_1$ and $_1\phi_1$ polynomials. We can additionally use the identities  \cite[(1.13.14), (1.13.15), (1.13.16)]{koekoek2010hypergeometric}, 
namely, 
\begin{align}
    \pQq{1}{1}{q^{-n}}{b}{q}{x}&=\frac{x^n}{\qraising{b}{q}{n}q^n}\pQq{2}{1}{q^{-n},b^{-1}q^{1-n}}{0}{q}{\frac{bq^{n+1}}{x}}\label{qlaguerrereciprocal}, \\
    \pQq{2}{1}{q^{-n},a}{b}{q}{z} &=
    \frac{\qraising{a}{q}{n}}{\qraising{b}{q}{n}} q^{-n-\binom{n}{2}}(-z)^n
    \pQq{2}{1}{q^{-n},b^{-1}q^{1-n}}{a^{-1}q^{1-n}}{q}{\frac{b q^{n+1}}{az}}\label{littleqjacobireciprocal1}, \\
    \pQq{2}{0}{q^{-n},a}{-}{q}{x}&=\frac{x^n}{q^n}\qraising{a}{q}{n}\pQq{2}{1}{q^{-n},0}{a^{-1}q^{1-n}}{q}{\frac{q^{n+1}}{ax}},
    \label{littleqjacobireciprocal2}
\end{align}
to extend it to zeros of some additional families of $_2\phi_0$, $_1\phi_1$, and $_2\phi_1$ polynomials.

\subsubsection{Zeros of a family of $_1\phi_1$ polynomials} 
Polynomials
$$ 
\pQq{1}{1}{q^{-n}}{b}{q}{x}
$$
are related, by \eqref{QHGFLaguerre} and \eqref{Stieltjes-Wigert}, to $q$-Laguerre polynomials and the Stieltjes-Wigert polynomials. Also, by \eqref{qlaguerrereciprocal}, they can be rewritten in terms of $_2\phi_1$ polynomials, and thus, by \eqref{def:qbessel}, of some $q$-Bessel polynomials. In particular, \eqref{qlaguerrelmesh}, \eqref{Stieltjeslmesh}, and \eqref{eq:Besselrealzeros} yield the locations of the zeros and an upper bound for the logarithmic mesh of these polynomials, 
\begin{equation}
    \pQq{1}{1}{q^{-n}}{b}{q}{x}\in\begin{cases}
        \P^{q^2}_n(\R_{<0})\subset\P^{q}_n(\R_{< 0}) \quad &\text{ for } b>0, \\        \overline{\P^{q^2}_n}(\R_{<0})\subset\P^{q}_n(\R_{< 0})\quad & \text{ for } b=0, \\
        \P^{q}_n(\R_{< 0}) \quad &\text{ for } b<0,
    \end{cases}
\end{equation}
while by \eqref{qLaguerreinterlacing} and \eqref{eq:Besselinterl} we have the interlacing properties: for  $q^2\leq t<1$,
\begin{align}
    \pQq{1}{1}{q^{-n}}{tb}{q}{x}&\preccurlyeq\pQq{1}{1}{q^{-n}}{b}{q}{x} \quad \text{for } b\geq 0 ,\\
     \pQq{1}{1}{q^{-n}}{b}{q}{x}&\preccurlyeq \pQq{1}{1}{q^{-n}}{t^{-1}b}{q}{t^{-1}x} \quad \text{for } b< 0.
\end{align}

\subsubsection{Zeros of a family of $_2\phi_1$ polynomials} 

The relation of the little $q$-Jacobi \eqref{def:LittleqJacobi} and $q$-Bessel polynomials \eqref{def:qbessel} with the $_2\phi_1$ functions, along with Theorems \ref{Thm:propertieslittleqjacobi} and \ref{thm:lmeshqbessel}, yield results on zeros of the family 
\begin{equation}
    \pQq{2}{1}{q^{-n},a}{b}{q}{x}
\end{equation}
for a range of parameters $a$ and $b$. Some of them are summarized in  Table \ref{tab:2Q1}, see rows 1, 2 and 5. Additional information about the zeros of $_2\phi_1$ polynomials follows from using the reciprocals of $q$-Laguerre polynomials \eqref{qlaguerrereciprocal} (see row 6 of Table \ref{tab:2Q1}). Finally,  \eqref{littleqjacobireciprocal1} connects two $_2\phi_1$ polynomials; using this connection,  and row 1 of Table \ref{tab:2Q1}  we obtain the rows 3 and 4 of Table \ref{tab:2Q1}.

Monotonicity results follow from Corollary \ref{cor:interlacinglittleqJacobi}: for $q^2\leq t_1,t_2\leq1$ such that $t_1t_2\not=1$, and for $0<b<1$, if $0\leq a< bq^{n-1}$, 
\begin{equation}\label{2phi1interlacing1}
 \pQq{2}{1}{q^{-n},t_1a}{b}{q}{x}\prec\pQq{2}{1}{q^{-n},t_2a}{t_2b}{q}{x} ,
\end{equation}
while if $a<0$,
\begin{equation}\label{2phi1interlacing2}
 \pQq{2}{1}{q^{-n},a}{b}{q}{x}\prec\pQq{2}{1}{q^{-n},t_1a}{b}{q}{x} .
\end{equation}

\subsubsection{Zeros of a family of $_2\phi_0$ polynomials.} 

Identity \eqref{littleqjacobireciprocal2} gives us a connection between $_2\phi_0$ and $_2\phi_1$ polynomials, namely,  
$$
\pQq{2}{0}{q^{-n},a}{-}{q}{x}=\frac{x^n}{q^n}\qraising{a}{q}{n}\pQq{2}{1}{q^{-n},0}{a^{-1}q^{1-n}}{q}{\frac{q^{n+1}}{ax}},
$$
so we can use Row 1 of Table \ref{tab:2Q1}, under the assumption that $a>q^{1-n}$, to get the result in Row 7 of Table \ref{tab:2Q1}. Finally, by Corollary \ref{cor:interlacinglittleqJacobi},
we conclude that for $q^2<t<1$,
\begin{equation*}
    \pQq{2}{0}{q^{-n},t^{-1}a}{-}{q}{tx} \prec \pQq{2}{0}{q^{-n},a}{-}{q}{x}.
\end{equation*}

\begin{table}[h]
    \centering
\begin{tabular}{|c|c|c|c|}
\hline $a$ & $b$ & \textbf{Roots in} & $\lmesh$ \\
\hline \hline $\left(-\infty, bq^{n-1}\right)$ & $(0,1)$ & $(0,q)$ & $<q$ \\[1mm]
\hline $\Big\{b,bq,\dots, bq^{n-1} \Big\} $ & $(0,1)$ & $(0,q)$ & $\leq q$ \\
\hline $\Big(q^{1-n},\infty\Big)$ & $(-\infty,0)$ & $\R_{<0}$ & $<q$ \\
\hline $\Big(q^{1-n},bq^{n+1}\Big)$ & $\Big(q^{-2n+2},\infty\Big)$ &  $\R_{>0}$  & $<q$ \\
\hline $(-\infty,0)$ & $\{0\}$ & $(0,q)$ & $<q$\\
\hline  $\Big(q^{1-n},\infty\Big)$  & $\{0\}$ & $\R_{<0}$  & $<q$ \\
\hline $\Big(q^{1-n},\infty\Big)$ & $\cdot$ & {$\R_{> 0}$} & $<q$ \\
\hline  $\cdot$ & $(0,1)$ & $\R_{<0}$ & $<q^2$ \\
\hline  $\cdot$ & $0$ & $\R_{<0}$ & $\leq q^2$ \\
\hline $\cdot$ & $(-\infty,0)$ & $\R_{<0}$ & $<q$ \\
\hline
\end{tabular}
\vspace{2mm}
\caption{Real zeros of some $\pQq{i+1}{j}{q^{-n}, \, a}{  b}{q}{x}$ polynomials, $i,j\in\{0,1\}$.
}
    \label{tab:2Q1}
\end{table}   

%%%%%%%%%%%%%%%%%%%%%%%%%%%%%%%%%%%%%%%%%%%%%%%%%%%%%%%
\section{Proofs} \label{sectionProofs}
%%%%%%%%%%%%%%%%%%%%%%%%%%%%%%%%%%%%%%%%%%%%%%%%%%%%%

\subsection{Proof of Theorem \ref{Thm:propertieslittleqjacobi}}

The proof is a direct application of the following result; see, e.g. \cite[Lemma 2]{gochhayat2016interlacing}:
let $P_n(x, \tau)$ be orthogonal with respect to $\omega(x; \tau) d\lambda(x)$, where $\omega(x; \tau)$ is positive and continuous functions of $x$ on an interval $[\alpha, \beta]$ for every $\tau\in (c,d)$. If the ratio $\frac{\omega(x; \tau_2)}{\omega(x; \tau_1)}$ is an increasing (decreasing) function of $x$ for each $\tau_1,\tau_2\in(c,d)$ such that $\tau_1<\tau_2$, then the zeros of $P_n(x, \tau)$ are increasing (decreasing) with respect to $\tau$.

By \eqref{LittleJacobiContinuousweight} we have that $p_{n}(x;a,b|q)$ is orthogonal with respect to $\omega(x;a, b)d\lambda(x) $, where
    $$
    \omega(x;a,b)=\frac{\qraising{bq}{q}{\infty}\qraising{xq}{q}{\infty}}{\qraising{bxq}{q}{\infty}\qraising{q}{q}{\infty}}xa^{\frac{\ln{x}}{\ln{q}}}.
    $$
 For parameters $b_1<b_2<\frac{1}{q}$, since
    $$
    \frac{\omega(x;a,b_2)}{\omega(x;a,b_1)}=\frac{\qraising{b_2q}{q}{\infty}\qraising{b_1xq}{q}{\infty}}{\qraising{b_2xq}{q}{\infty}\qraising{b_1q}{q}{\infty}}=\prod_{k=0}^\infty\frac{(1-b_2q^{k+1})(1-b_1xq^{k+1})}{(1-b_1q^{k+1})(1-b_2xq^{k+1})},
    $$
    we get that
    \begin{align*}
      \frac{d}{dx}\left(\frac{\omega(x;a,b_2)}{\omega(x;a,b_1)}\right)&=\frac{\omega(x;a,b_1)}{\omega(x;a,b_2)}\, \sum_{k=0}^\infty q^{k+1}\left(\frac{b_2}{1-b_2xq^{k+1}}-\frac{b_1}{1-b_1xq^{k+1}}\right)\\ &=\frac{\omega(x;a,b_1)}{\omega(x;a,b_2)}\, \sum_{k=0}^\infty q^{k+1}\frac{b_2-b_1}{(1-b_1xq^{k+1})(1-b_2xq^{k+1})}.
    \end{align*}
   Thus,
    $$
    \sign{\frac{d}{dx}\left(\frac{\omega(x;a,b_2)}{\omega(x;a,b_1)}\right)}=\sign{b_2-b_1}>0.
    $$
     It implies that the zeros of $\lj{n}{a}{b}{x}$ increase with respect to $b$. 
     
     Similarly, for the parameter $a$, let $a_1<a_2$ such that $0<a_1<a_2<\frac{1}{q}$. Differentiation with respect to $x$ of the ratio
    $$
    \frac{\omega(x;a_2,b)}{\omega(x;a_1,b)}=\left(\frac{a_2}{a_1} \right) ^{\frac{\ln x}{\ln q}} 
    $$
    yields
    $$
    \frac{d}{dx}\left(\frac{\omega(x;a_2,b)}{\omega(x;a_1,b)}\right)=\left(\frac{a_2}{a_1} \right) ^{\frac{\ln x}{\ln q}}\frac{\ln\left(\frac{a_2}{a_1}\right)}{x\ln q} <0, \qquad \text{for } x\in(0,1).
    $$

\subsection{Proof of Theorem \ref{Thm:propertieslittleqjacobi2}}

Our approach to prove the interlacing results is standard: if $g,p,r$ satisfy a contiguous relation and $p$ interlaces $r$, then $g$ also interlaces $p$ and $r$. Specifically, we will use the following result:
\begin{lemma}[\cite{jordaan2009interlacing}]\label{Jordaaninterlacing}
     Let $p_n$ and $r_{n-1}$ be polynomials of degree $n$ and $n-1$, respectively, with their zeros in a finite interval $(a,b)$, and such that $p_n\prec r_{n-1}$. For polynomials $A$ and $B$ that have a constant sign on $(a,b)$, consider 
    $$
    g(x)=A(x)p_n(x)+B(x)r_{n-1}(x).
    $$
Then
\begin{itemize}
    \item if the degree of g is $n$, then $g\in \P_n(\R)$, $g \prec r_{n-1}$, and either 
    $$
    p_n\prec g   \quad \text{ or } \quad  g\prec p_n ;
    $$
    \item if the degree of g is $n-1$, then $g\in \P_{n-1}(\R)$, $p_n\prec g$, and either
    $$
  g \prec r_{n-1} \quad \text{ or } \quad   r_{n-1} \prec g.
    $$
\end{itemize}
\end{lemma}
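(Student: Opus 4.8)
The plan is to pin down the zeros of $g$ by sampling it at the zeros of $p_n$ and of $r_{n-1}$ and reading off sign changes. Label the zeros of $p_n$ as $x_1<\dots<x_n$ and those of $r_{n-1}$ as $y_1<\dots<y_{n-1}$; the hypothesis $p_n\prec r_{n-1}$ gives the strict interlacing $x_1<y_1<x_2<\dots<y_{n-1}<x_n$, and all of these points lie in $(a,b)$. At a zero $x_k$ of $p_n$ one has $g(x_k)=B(x_k)\,r_{n-1}(x_k)$, and at a zero $y_j$ of $r_{n-1}$ one has $g(y_j)=A(y_j)\,p_n(y_j)$. Since $r_{n-1}$ is a product of $n-1$ linear factors, exactly $n-k$ of which are negative at $x_k$, its sign at $x_k$ is $\sign{c}\,(-1)^{n-k}$ with $c$ its leading coefficient; similarly $\sign{p_n(y_j)}=\sign{d}\,(-1)^{n-j}$ with $d$ the leading coefficient of $p_n$. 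Because $A$ and $B$ keep a constant, and in particular nonzero, sign on the interval $(a,b)$ containing every $x_k$ and $y_j$, the values $g(x_k)$ are nonzero and strictly alternate in sign as $k=1,\dots,n$, and likewise the $g(y_j)$ strictly alternate as $j=1,\dots,n-1$. In particular the zeros of $g$ avoid the $x_k$ and $y_j$ entirely.

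From the alternation of the $g(x_k)$ I obtain at least one zero of $g$ in each of the $n-1$ gaps $(x_k,x_{k+1})$, and from that of the $g(y_j)$ at least one in each of the $n-2$ gaps $(y_j,y_{j+1})$. Now split on $\deg g$. If $\deg g=n-1$, these $n-1$ simple zeros exhaust $g$, one per gap $(x_k,x_{k+1})$ and none outside $(x_1,x_n)$, which is precisely $p_n\prec g$; the $y_j$-alternation then forces $n-2$ of them into the gaps $(y_j,y_{j+1})$ and the last zero to the left of $y_1$ or to the right of $y_{n-1}$, giving $g\prec r_{n-1}$ or $r_{n-1}\prec g$. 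If $\deg g=n$, a parity count settles matters: each gap $(x_k,x_{k+1})$ carries an odd number of zeros and the total is at most $n$, so each gap carries exactly one and a single zero remains; as $n-1$ of the $n$ zeros are already real, the last is real and simple too, whence $g\in\P_n(\R)$, and according as it lands in $(-\infty,x_1)$ or $(x_n,+\infty)$ we get $g\prec p_n$ or $p_n\prec g$. The parallel count against the $y_j$ yields $g\prec r_{n-1}$.

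The delicate point is the sign bookkeeping that selects which alternative occurs and, when $\deg g=n$, simultaneously certifies $g\prec r_{n-1}$ while placing the stray zero on a definite side. The fact that makes this coherent is that $\sign{g(x_k)}=\sign{B}\,\sign{c}\,(-1)^{n-k}$ and $\sign{g(y_k)}=\sign{A}\,\sign{d}\,(-1)^{n-k}$ carry the same factor $(-1)^{n-k}$, so whether the unique zero of $g$ in $(x_k,x_{k+1})$ lies to the left or right of $y_k$ is governed by the single, $k$-independent comparison of $\sign{A}\,\sign{d}$ with $\sign{B}\,\sign{c}$. This uniformity is exactly what rules out a mixed interlacing and forces one global alternative, consistent with the side on which the stray zero of $g$ falls. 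The only genuinely fiddly ingredient is computing the leading coefficient of $g$, which depends on $\deg A$, $\deg B$, and possible top-degree cancellation, and matching the endpoint signs $g(x_1),g(x_n),g(y_1),g(y_{n-1})$ against the behavior of $g$ at $\pm\infty$; once these are tabulated, each assertion of the lemma reads off from the sign pattern.
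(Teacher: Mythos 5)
The paper does not actually prove this lemma---it quotes it from \cite{jordaan2009interlacing}---so your argument has to stand entirely on its own. Most of it does: the sign-alternation-plus-parity scheme is the right (and standard) mechanism, your sign computations at the $x_k$ and $y_j$ are correct, and they rigorously give (a) one simple zero of $g$ in each gap $(x_k,x_{k+1})$ in both degree cases, hence real-rootedness; (b) the entire second bullet ($p_n\prec g$, and the extra zero forced outside $[y_1,y_{n-1}]$ but inside $(x_1,x_n)$, yielding $g\prec r_{n-1}$ or $r_{n-1}\prec g$); and (c) the dichotomy $p_n\prec g$ or $g\prec p_n$ in the first bullet, since the stray zero must land outside $[x_1,x_n]$.

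The genuine gap is the sentence ``The parallel count against the $y_j$ yields $g\prec r_{n-1}$'' in the case $\deg g=n$. The parallel count only shows that each of the $n-2$ gaps $(y_j,y_{j+1})$ contains exactly one zero of $g$; it leaves two zeros unplaced: the stray zero outside $[x_1,x_n]$, and one zero sitting either in $(x_1,y_1)$ or in $(y_{n-1},x_n)$. The conclusion $g\prec r_{n-1}$ holds only if these two land on \emph{opposite} ends, and that requires the comparison you identify (sign of $A$ times the leading coefficient of $p_n$ versus sign of $B$ times the leading coefficient of $r_{n-1}$, which decides whether the in-gap zeros sit left or right of the $y_k$) to be correlated with the sign of the leading coefficient of $g$ (which decides the side of the stray zero). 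You dismiss this as fiddly bookkeeping, but it is not bookkeeping: the hypotheses place no constraint at all on the leading coefficient of $g$, and the clause is in fact \emph{false} as stated. Take $n=2$, $(a,b)=(0,4)$, $p_2(x)=(x-1)(x-3)$, $r_1(x)=x-2$, so $p_2\prec r_1$; let $A(x)=1$ and $B(x)=-2x-1$, which is negative throughout $(0,4)$. Then $g(x)=-x^2-x+5$ has degree $2=n$ and zeros $(-1\pm\sqrt{21})/2\approx -2.79$ and $\approx 1.79$, and the zero $2$ of $r_1$ does not separate them: $g\prec r_1$ fails (while $g\prec p_2$ holds, exactly as your argument predicts). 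Your proof, and the lemma, become correct under the additional hypothesis that the leading coefficient of $g$ has the same sign as $A\cdot\mathrm{lead}(p_n)$; this is automatic when $A$ and $B$ are constants, as in the paper's main application \eqref{contiguousrelation32} (though note the paper's other application, via \eqref{contiguousrelation4}, uses a linear $B$, so the condition would need checking there as well). So you should either add that hypothesis, restrict to constant $A,B$, or weaken the first bullet; no sign tabulation can rescue the statement in the generality in which it is written.
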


We will also need the following contiguous relations.
\begin{proposition}Let $n \in \mathbb{N}$ and $b \neq q^{-m}$ for $m = 0, 1, 2, \ldots$. Then the following relations hold:
    \begin{align} \label{contiguousrelation1}
        -\lj{n}{a}{q^2b}{x}& = \tfrac{a(1-q^n)(1-abq^{n+3})}{q^{n-2}(1-aq)(1-aq^2)}x\lj{n-1}{q^2a}{q^2b}{x}-\lj{n}{qa}{qb}{x}.
    \end{align}
   \begin{equation}\label{contiguousrelation2}
   \begin{split}
        (1-aq)&(1+bq^{n}(aq^{n+1}-aq-1))\lj{n}{a}{b}{x}=\\&(1-bq^{n})(1-aq^{n+1})\lj{n}{qa}{\tfrac{b}{q}}{x}\\
        & +aq(1-q^n)(1-abq^{n+1})(bqx-1)\lj{n-1}{qa}{qb}{x}.
   \end{split}
        \end{equation}
 \begin{align}\label{contiguousrelation3}
        q^n(1-abq^n)\lj{n}{a}{b}{x}=(1-abq^{2n})\lj{n}{a}{\frac{b}{q}}{qx}-(1-q^n)\lj{n-1}{a}{b}{x}.
    \end{align}  
    \begin{align} \label{contiguousrelation4}
     bq^{n+1} \left(1-aq^{n+1} \right) &\lj{n+1}{a}{b}{x}=
     \\&(1 - ab q^{2n+2}) \left(1 - qbx\right) \lj{n}{a}{qb}{x} -\left(1-b q^{n+1}\right) \lj{n}{a}{b}{x}.\nonumber
    \end{align}
  
\end{proposition}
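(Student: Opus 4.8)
The plan is to prove all four identities by the same elementary device: each is an equality between polynomials in $x$ of degree at most $n$ (or $n+1$ in \eqref{contiguousrelation4}), so it suffices to compare the coefficients of each power $x^k$. I would expand every little $q$-Jacobi polynomial through its defining series \eqref{def:LittleqJacobi}, which gives, in the notation of \eqref{basicform},
\begin{equation*}
\coef{k}{\lj{n}{a}{b}{x}}=\frac{\qraising{q^{-n}}{q}{k}\,\qraising{abq^{n+1}}{q}{k}}{\qraising{aq}{q}{k}\,\qraising{q}{q}{k}}\,q^{k},\qquad 0\le k\le n.
\end{equation*}
Two bookkeeping rules handle the remaining ingredients: scaling the argument, as in $\lj{n}{a}{b/q}{qx}$ in \eqref{contiguousrelation3}, multiplies the $k$-th coefficient by $q^{k}$, while multiplication by $x$, as in the $x\,\lj{n-1}{q^2a}{q^2b}{x}$ term of \eqref{contiguousrelation1} or the $bqx$ factor in \eqref{contiguousrelation2} and \eqref{contiguousrelation4}, shifts the index $k\mapsto k-1$. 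After these substitutions each relation becomes, for every fixed $k$, a single scalar identity among products of $q$-Pochhammer symbols.

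First I would extract from each such identity a common factor and reduce it to a rational identity in the quantities $q^{k}$, $q^{n}$, $a$, and $b$ (with $q$ a fixed parameter). The only tools needed are the elementary shift rules
\begin{equation*}
\qraising{c}{q}{k}=(1-c)\,\qraising{cq}{q}{k-1},\qquad \frac{\qraising{cq}{q}{k}}{\qraising{c}{q}{k}}=\frac{1-cq^{k}}{1-c},
\end{equation*}
applied to each of the four families of symbols $\qraising{q^{-n}}{q}{k}$, $\qraising{abq^{n+1}}{q}{k}$, $\qraising{aq}{q}{k}$, $\qraising{q}{q}{k}$ that occur with shifted parameters on the two sides. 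For instance, in \eqref{contiguousrelation3} dividing through by $\qraising{q^{-(n-1)}}{q}{k}\qraising{abq^{n+1}}{q}{k}\,q^{k}/\big(\qraising{aq}{q}{k}\qraising{q}{q}{k}\big)$ collapses the whole relation, after cancelling the common factor $1-abq^{n}$ and clearing the denominators $1-q^{k-n}$ and $1-abq^{n+k}$, to the Laurent-polynomial identity $q^{n}(1-q^{-n})(1-abq^{n+k})=(1-abq^{2n})q^{k}(1-q^{-n})-(1-q^{n})(1-q^{k-n})$, which expands to an elementary equality. The same mechanism disposes of the remaining three, the only difference being the number of terms and the set of parameters that get shifted.

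The main obstacle is one of organization rather than depth: in \eqref{contiguousrelation2} and \eqref{contiguousrelation4} three shifts act at once ($a\mapsto qa$, $b\mapsto b/q$ or $b\mapsto qb$, and $n\mapsto n\pm1$) together with a linear factor in $x$, so one must keep careful track of which Pochhammer symbol absorbs each shift and of the index displacement produced by the $x$-term before a common factor can be isolated. I would treat every coefficient identity as an identity of rational functions in $a$ and $b$; this is legitimate under the hypothesis $b\neq q^{-m}$, which guarantees that none of the $q$-Pochhammer symbols occurring in the denominators vanish, so that matching coefficients of rational functions proves the stated equalities wherever they are defined.

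As an independent check on \eqref{contiguousrelation1}, one can also avoid part of the coefficient bookkeeping by invoking the $q$-derivative identity \eqref{corollary:qderivativeqhyper}: applied with parameters $qa,qb$ it expresses $\D_q\lj{n}{qa}{qb}{x}$ as an explicit constant multiple of $\lj{n-1}{q^2a}{q^2b}{x}$, so that the $x\,\lj{n-1}{q^2a}{q^2b}{x}$ term in \eqref{contiguousrelation1} is, up to a factor $x$, simply a $q$-derivative. Combining this with the definition \eqref{def:qderivative} of $\D_q$ reduces \eqref{contiguousrelation1} to a two-term relation between $\lj{n}{a}{q^2b}{x}$ and $\lj{n}{qa}{qb}{x}$, which is the shortest coefficient comparison of the four.
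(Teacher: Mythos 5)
Your proposal takes essentially the same route as the paper's proof: each relation is verified by matching the coefficients of $x^k$, extracting a common product of $q$-Pochhammer symbols, and reducing to an elementary scalar identity (your reduced identity for \eqref{contiguousrelation3} is correct, as is your bookkeeping for the argument rescaling $x\mapsto qx$ and the index shift caused by multiplication by $x$). Your extra observation that the $x\,\lj{n-1}{q^2a}{q^2b}{x}$ term in \eqref{contiguousrelation1} is, via \eqref{corollary:qderivativeqhyper}, a $q$-derivative of $\lj{n}{qa}{qb}{x}$ — collapsing that relation to the two-term identity $(1-aq)\,\lj{n}{a}{q^2b}{x}=\lj{n}{qa}{qb}{x}-aq\,\lj{n}{qa}{qb}{qx}$ — is a valid and slightly cleaner variant for that one case, but it does not change the nature of the argument.
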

The recurrence relation \eqref{contiguousrelation4} has appeared previously in  \cite[Eq. (7)]{gochhayat2016interlacing}, and we include it here for completeness. 
\begin{proof}
We can prove these identities by showing that for every $k$, the $k$-th coefficients in both sides match.

    For instance, to show \eqref{contiguousrelation1} it is convenient to factor out the term 
    $$ T_k:= \frac{q^k\qraising{q^{-n}}{q}{k}\qraising{abq^{n+3}}{q}{k}}{\qraising{aq}{q}{k+1}\qraising{q}{q}{k}},$$
    from each coefficient. Notice that with the notation \eqref{basicform}, for $k\geq 1$,
    $$\coef{k}{\lj{n}{a}{q^2b}{x}}=\frac{q^k\qraising{q^{-n}}{q}{k}\qraising{abq^{n+3}}{q}{k}}
    {\qraising{aq}{q}{k}\qraising{q}{q}{k}}
    =(1-aq^{k+1}) T_k.$$
    Also
    \begin{align*}
    \coef{k}{\tfrac{a(1-q^n)(1-abq^{n+3})}{q^{n-2}(1-aq)(1-aq^2)}x\lj{n-1}{q^2a}{q^2b}{x}}
    &=\tfrac{a(1-q^n)(1-abq^{n+3})}{q^{n-2}(1-aq)(1-aq^2)} \tfrac{q^{k-1}\qraising{q^{-n+1}}{q}{k-1}\qraising{abq^{n+4}}{q}{k-1}}
    {\qraising{aq^3}{q}{k-1}\qraising{q}{q}{k-1}} \\
    &=\frac{a(1-q^n)(1-q^{k})}{q^{n-1}(1-q^{-n}) } T_k\\
    &=-aq(1-q^{k}) T_k. 
    \end{align*}
    Finally,
     $$\coef{k}{\lj{n}{qa}{qb}{x}}=\frac{q^k\qraising{q^{-n}}{q}{k}\qraising{abq^{n+3}}{q}{k}}
    {\qraising{aq^2}{q}{k}\qraising{q}{q}{k}}
    =(1-aq) T_k.$$
    Then, \eqref{contiguousrelation1} follows from noticing that
    $$
    -(1-aq^{k+1})  =-aq(1-q^{k-1})   -(1-aq)  , \qquad \text{for } k\geq 1.
    $$

As for $k=0$ (the independent term), it is easy to see that on both sides it is equal to $1-aq$.

Relation \eqref{contiguousrelation2} follows from factoring out $$S_k:=\frac{q^k\qraising{q^{-n}}{q}{k}\qraising{abq^{n+1}}{q}{k+1}}{\qraising{q}{q}{k}\qraising{aq^2}{q}{k}}.$$ 
Then the equality of the $k$-th coefficients on both sides follows from the identity
$$(1-aq^{k+1})(1+abq^{2n+1}-abq^{n+1}-bq^{n})= $$
$$(1-bq^{n})(1-aq^{n+1})-abq^{n+1}(1-aq^{k+1})(1-q^{k})+aq^{k+1}(q^{n-k}-1)(1-abq^{n+k+1}).
$$

To prove \eqref{contiguousrelation3}, we can factor out  the term $U_k:=\frac{q^k\qraising{q^{-n}}{q}{k}\qraising{abq^{n}}{q}{k}}{\qraising{q}{q}{k}\qraising{aq}{q}{k}}$ to obtain that
\begin{align*}
    &\coef{k}{(1-abq^{2n})\lj{n}{a}{b/q}{qx}-(1-q^n)\lj{n-1}{a}{b}{x} }\\
    &=(1-abq^{2n}) U_k+(q^{-n+k}-1)U_k\\
    &=q^{n-k}(1-abq^{n+k}) U_k\\
    &=\coef{k}{q^n(1-abq^n)\lj{n}{a}{b}{x}}
    \end{align*}
\end{proof}

\begin{proof}[Proof of Theorem \ref{Thm:propertieslittleqjacobi2}]

We start by proving that $\lj{n}{a}{b}{x}$ has the lmesh smaller than $q$. By \eqref{lmeshRplus},  $\lj{n}{a}{b}{x}\in\P_n^q(\R_{>0})$  is equivalent to showing that 
$$\lj{n}{a}{b}{x}\prec \lj{n}{a}{b}{qx}.$$

Since the zeros of $\lj{n}{a}{b}{qx}$ are the zeros of $\lj{n}{a}{b}{x}$ scaled by $q^{-1}$, we readily get that  
\begin{equation}\label{lmeshauxiliar2}
    \lj{n}{a}{b}{x}\ll \lj{n}{a}{b}{qx}. 
\end{equation}
So, by Remark \ref{rem:interlacing.partial.order}, we only need to show that the polynomials $\lj{n}{a}{b}{x}$ and $\lj{n}{a}{b}{qx}$ have a common interlacer. We will prove that $\lj{n-1}{a}{qb}{x}$ is this interlacer. Indeed, \eqref{eq:interlacing1} directly yields that $\lj{n}{a}{b}{x}\prec\lj{n-1}{a}{qb}{x}$. On the other hand,  by the orthogonality of the little $q$-Jacobi polynomials we know that $\lj{n}{a}{qb}{x}\prec\lj{n-1}{a}{qb}{x}$.
Identity \eqref{contiguousrelation3} gives us the contiguous relation
\begin{equation}\label{contiguousrelation32}
   (1-abq^{2n+1})\lj{n}{a}{b}{qx}=(1-q^n)\lj{n-1}{a}{qb}{x}+ q^n(1-abq^{n+1})\lj{n}{a}{qb}{x},
\end{equation}
where all the polynomials are multiplied by a constant (that does not change sign). Thus, the conditions of Lemma \ref{Jordaaninterlacing} are satisfied and we conclude that  
\begin{equation}\label{lmeshauxiliar1}
    \lj{n}{a}{b}{qx}\prec \lj{n-1}{a}{qb}{x}.
\end{equation}

To prove statement \textit{(i)} of the theorem, consider the $q$-hypegeometric expression of $\lj{n}{a}{b}{x}$ from \eqref{def:LittleqJacobi}. Using \eqref{qderivative1} and  \eqref{corollary:qderivativeqhyper} we obtain that, up to a multiplicative constant, the polynomial $\lj{n-1}{qa}{qb}{x}$ is the $q$-derivative of $\lj{n}{a}{b}{x}$:
   \begin{align}
       \D_q(\lj{n}{a}{b}{x})&=\D_q\left(\pQq{2}{1}{q^{-n}, abq^{n+1}}{aq}{q}{qx}\right)\nonumber\\
       &=q\frac{(1-q^{-n})(1-abq^{n+1})}{(1-q)(1-aq)} \pQq{2}{1}{q^{-n+1}, abq^{n+2}}{aq^2}{q}{qx}\nonumber\\
       &=q\frac{(1-q^{-n})(1-abq^{n+1})}{(1-q)(1-aq)}\lj{n-1}{qa}{qb}{x}.\label{qderivativelittleqJacobi}
   \end{align}

As it was already established, $\lj{n}{a}{b}{x}\in\P_n^q(\R_{>0})$. By Proposition \ref{prop:lmeshinterlaing} we conclude that 
$$\lj{n}{a}{b}{x}\prec\lj{n-1}{qa}{qb}{x}.$$

For the statement \textit{(ii)} of the theorem we will use the partial order $\ll$ and Remark \ref{rem:interlacing.partial.order}.

Equation \eqref{eq:interlacing3} of Theorem \ref{thmx:interlacings} (with the change of variables $a\to qa$ and $b\to \tfrac{b}{q}$ for the second equation), yields:
 \begin{equation*}
     \lj{n}{a}{q^2b}{x}\prec\lj{n}{qa}{qb}{x} \qquad \text{for } 0<aq<1 \text{ and }bq<1, \text{ and}
 \end{equation*}
\begin{equation*}
     \lj{n}{qa}{qb}{x}\prec\lj{n}{q^2a}{b}{x} \qquad \text{for } 0<aq^2<1 \text{ and }b<1.
 \end{equation*}
By \eqref{eq:interlacing.implies.monotonicity}, these two equations imply for $0<aq<1$ and $b<1$,
 \begin{equation}\label{newresultequation2}
\lj{n}{a}{q^2b}{x}\ll\lj{n}{qa}{qb}{x}\ll \lj{n}{q^2a}{b}{x}. 
 \end{equation}
By Theorem \ref{Thm:propertieslittleqjacobi}, for $0<aq,bq<1$, we get, 
 \begin{equation}\label{newresultequation3}
   \lj{n}{a}{q^2b}{x}\ll \lj{n}{q^2a}{b}{x}. 
 \end{equation}
 Gathering together $\eqref{newresultequation2}$ and \eqref{newresultequation3} we obtain for $0<aq<1$ and $bq<1$,
 \begin{equation}\label{newresultequation4}
     \lj{n}{a}{q^2b}{x}\ll \lj{n}{q^2a}{b}{x}. 
 \end{equation}
On the other hand, Equation \eqref{eq:interlacing2} with a modification of the parameters gives us
\begin{equation}\label{newresultequation5}
 \lj{n}{q^2a}{b}{x}\prec\lj{n-1}{q^2a}{q^2b}{x} \qquad \text{for } aq^3<1 \text{ and }bq<1.   
\end{equation}
Combining statement \textit{(i)},  \eqref{contiguousrelation4}, and Lemma \ref{Jordaaninterlacing} we have that 
\begin{equation}\label{newresultequation6}
       \lj{n}{a}{q^2b}{x}\prec\lj{n-1}{q^2a}{q^2b}{x}.
   \end{equation}
Equations \eqref{newresultequation5} and \eqref{newresultequation6} mean that $\lj{n}{q^2a}{b}{x}$ and $\lj{n}{a}{q^2b}{x}$ have a common interlacing. Together with \eqref{newresultequation4} and Remark \ref{rem:interlacing.partial.order} we conclude that
\begin{equation*}
     \lj{n}{a}{q^2b}{x}\prec\lj{n}{q^2a}{b}{x}.
 \end{equation*}

For the statement \textit{(iii)} of the theorem we assume $b<0$, so that $b<q^2b$. By Theorem \ref{Thm:propertieslittleqjacobi}, we know that
$$\lj{n}{a}{b}{x}\ll \lj{n}{a}{q^2b}{x}.$$ 
On the other hand, these two polynomials have a common interlacer $\lj{n+1}{a}{b}{x}$. Indeed,
 \eqref{eq:interlacing2} asserts that
$\lj{n+1}{a}{b}{x}\prec\lj{n}{a}{q^2b}{x}$, 
 while the orthogonality of the little $q$-Jacobi implies $\lj{n+1}{a}{b}{x}\prec\lj{n}{a}{b}{x}$.
 Therefore, by Remark \ref{rem:interlacing.partial.order} we conclude $\lj{n}{a}{b}{x}\prec\lj{n}{a}{q^2b}{x}$.
\end{proof}

\subsection{Proof of the remaining results}

In this last section we will prove Corollary \ref{cor:interlacinglittleqJacobi}, Proposition~\ref{littleqjacobimultiplicity}, and Theorem~\ref{thm:lmeshqbessel}.

Corollary \ref{cor:interlacinglittleqJacobi} follows from Theorems \ref{Thm:propertieslittleqjacobi} and \ref{Thm:propertieslittleqjacobi2}.

\begin{proof}[Proof of Corollary \ref{cor:interlacinglittleqJacobi}] Consider $q^2\leq t_1,t_2\leq 1$. For part \textit{(i)} we have the assumption that $a,b>0$, so $q^2b\leq t_1b \leq b$ and $a\leq t_2a \leq q^2 a$. Thus, monotonicity of the zeros (Theorem \ref{Thm:propertieslittleqjacobi}) implies that
 $$
 \lj{n}{a}{q^2b}{x} \ll \lj{n}{a}{t_1b}{x}\ll \lj{n}{t_2a}{b}{x} \ll \lj{n}{q^2a}{b}{x}.
 $$
By Theorem \ref{Thm:propertieslittleqjacobi2} \textit{(i)}, $\lj{n}{a}{q^2b}{x}\prec\lj{n}{q^2a}{b}{x}$, and we conclude that
\begin{equation*}
     \lj{n}{a}{t_1b}{x}\prec\lj{n}{t_2a}{b}{x}.
 \end{equation*}
The proof of part \textit{(ii)} is similar. Now we have $b<0$, so inequalities are reversed $b\leq t_1b\leq q^2b$ and Theorem \ref{Thm:propertieslittleqjacobi2} yields
$\lj{n}{a}{b}{x}\ll \lj{n}{a}{t_1b}{x}\ll \lj{n}{a}{q^2b}{x}$. On the other hand, Theorem \ref{Thm:propertieslittleqjacobi} \textit{(iii)}, asserts that $\lj{n}{a}{b}{x}\prec\lj{n}{a}{q^2b}{x}$, and we conclude
that $ \lj{n}{a}{b}{x}\prec\lj{n}{a}{t_1b}{x}$.
\end{proof}

\medskip

Proposition~\ref{littleqjacobimultiplicity} follows by induction on the degree $n$ and the contiguous relations \eqref{contiguousrelation3} and \eqref{contiguousrelation4}.

\begin{proof}[Proof of Proposition~\ref{littleqjacobimultiplicity}]
  First, we will prove relation \eqref{factorization}. Notice that $\lj{0}{a}{q^n}{x}=1$ so the case $j=n$ is trivial: 
\begin{equation}\label{Theorem:littlejacobi0}
    \lj{n}{a}{q^{-n}}{x}=\pQq{2}{1}{q^{-n},aq}{aq}{q}{qx}=\pQq{1}{0}{q^{-n}}{\cdot}{q}{qx}=E_n(qx).
\end{equation}
The proof for the other cases follows by induction on the degree of $\lj{n}{a}{q^{j}}{x}$. The base case $n=1$ is already covered by \eqref{Theorem:littlejacobi0}. Now suppose that for $n\in\N$ we have
\begin{align}\label{Theorem:inductionhypothesis}
    \lj{n}{a}{q^{-j}}{x}=E_{j}(qx)\lj{n-j}{a}{q^j}{q^{-j}x}, \quad \text{for } j\in\{1,\cdots,n\}.
\end{align}
We now prove the claim for degree $n+1$ and $j\in\{1,\cdots,n\}$  (since the case $j=n+1$ is already covered in \eqref{Theorem:littlejacobi0}). We first use the contiguous relation \eqref{contiguousrelation4} to obtain
\begin{align*}
    q^{n-j+1} &\left(1-aq^{n+1} \right) \lj{n+1}{a}{q^{-j}}{x}\\
    &=(1 - a q^{2n-j+2}) \left(1 - q^{-j+1}x\right) \lj{n}{a}{q^{-(j-1)}}{x}-\left(1- q^{n-j+1}\right) \lj{n}{a}{q^{-j}}{x}. \nonumber
\end{align*}
By induction hypothesis \eqref{Theorem:inductionhypothesis}, we can rewrite the right hand side as 
\begin{align}\label{Theorem:littlejacobimultiplicity3}
   & q^{n-j+1} \left(1-aq^{n+1} \right) \lj{n+1}{a}{q^{-j}}{x}=\\
    &=E_j(qx)\left((1 - a q^{2n-j+2})\lj{n-j+1}{a}{q^{j-1}}{\frac{x}{q^{j-1}}}
    -\left(1- q^{n-j+1}\right)\lj{n-j}{a}{q^j}{\frac{x}{q^j}}\right). \nonumber
\end{align}
On the other hand, the contiguous relation \eqref{contiguousrelation3}, with degree $n-j+1$, parameter $b=q^{-j}$, and variable $\frac{x}{q^j}$, yields
\begin{align}\label{Theorem:littlejacobimultiplicity4}
    q^{n-j+1} \left(1-aq^{n+1} \right)& \lj{n-j+1}{a}{q^{j}}{\frac{x}{q^j}}=\\&(1 - a q^{2n-j+2})\lj{n-j+1}{a}{q^{j-1}}{\frac{x}{q^{j-1}}}
    -\left(1- q^{n-j+1}\right)\lj{n-j}{a}{q^j}{\frac{x}{q^j}}.\nonumber
\end{align}
Gathering \eqref{Theorem:littlejacobimultiplicity3} and \eqref{Theorem:littlejacobimultiplicity4}, we conclude that
\begin{equation*}
        \lj{n+1}{a}{q^{-j}}{x}=E_{j}(qx)\lj{n-j+1}{a}{q^j}{q^{-j}x} \qquad \text{for }j=1,\dots,n.
\end{equation*}

We finally prove that $\lj{n}{a}{q^{-k}}{x}\in\overline{\P_n^q}((0,1))$ using the previous relation.

Recall that by the orthogonality that $\lj{n-j}{a}{q^j}{x}\in \P_n^q((0,1))$. Then, after the change of variable $x\mapsto xq^{-j}$ we obtain that $$\lj{n-j}{a}{q^j}{q^{-j}x}\in\P_n^q((0,q^j)).$$ Finally, since the zeros of $E_j(qx)$ are $q^{j-1}<q^{j-2}<\cdots<q<1,$ we conclude that
$$\lj{n}{a}{q^{-j}}{x}=E_{j}(qx)\,\lj{n-j}{a}{q^j}{q^{-j}x}\in \overline{\P^q_n}((0,1)).$$
\end{proof}
\begin{proof}[Proof of Proposition \ref{littleqjacobimultiplicity0}]
     For $k\in\{1,\dots,n\}$, in \eqref{normalizedqJacobi}, the $j$-th coefficient of the polynomial in the right-hand side vanishes when $j<k-1$, so that \eqref{normalizedqJacobi} can be written as 
     \begin{equation}
        \qraising{q^{-k+1}}{q}{n}\lj{n}{q^{-k}}{b}{x}=\sum_{j=k}^n\frac{\qraising{q^{-n}}{q}{j}}{\qraising{q}{q}{j}}\qraising{bq^{n-k+1}}{q}{j}\qraising{q^{j-k+1}}{q}{n-j}(qx)^j.
\end{equation}
Rearranging the summation we get 
\begin{equation}\label{normalizedqJacobi1}
        \qraising{q^{-k+1}}{q}{n}\lj{n}{q^{-k}}{b}{x}=(qx)^k\sum_{j=0}^{n-k}\frac{\qraising{q^{-n}}{q}{j+k}}{\qraising{q}{q}{j+k}}\qraising{bq^{n-k+1}}{q}{j+k}\qraising{q^{j+1}}{q}{n-j-k}(qx)^{j}.
\end{equation}
Direct computations using the definition of the $q$-Pochhammer symbol show that, for the $j$-th polynomial coefficient of the summation on the right-hand side of \eqref{normalizedqJacobi1} we have
\begin{multline*}
    \frac{\qraising{q^{-n}}{q}{j+k}}{\qraising{q}{q}{j+k}}\qraising{bq^{n-k+1}}{q}{j+k}\qraising{q^{j+1}}{q}{n-j-k}q^{j}=\\
    (-1)^kq^{\binom{k}{2}-nk}\qraising{bq^{n-k+1}}{q}{k}\qraising{q^{k+1}}{q}{n-k}\frac{\qraising{q^{-n+k}}{q}{j}}{\qraising{q}{q}{j}}\frac{\qraising{bq^{n+1}}{q}{j}}{\qraising{q^{k+1}}{q}{j}}q^j.
\end{multline*}
Substituting this into \eqref{normalizedqJacobi1} gives 
$$
\qraising{q^{-k+1}}{q}{n}\lj{n}{q^{-k}}{b}{x}=(-1)^kq^{\binom{k}{2}-nk}\qraising{bq^{n-k+1}}{q}{k}\qraising{q^{k+1}}{q}{n-k}(qx)^k\lj{n-k}{q^{k}}{b}{x},
$$
which proves the desired result.
\end{proof}
\medskip
Theorem~\ref{thm:lmeshqbessel} follows from approximating the $q$-Bessel polynomials with little $q$-Jacobi polynomials, see \eqref{def:qbessel}, and using what has been already proved for the little $q$-Jacobi polynomials.

\begin{proof}[Proof of Theorem~\ref{thm:lmeshqbessel}]
Recall from \eqref{def:qbessel} that the $q$-Bessel polynomials $\besselq{n}{b}{x}$ can be obtained as the limit $\lj{n}{a}{\frac{b}{qa}}{x}$ when $a\to0$. Then, interlacing \eqref{eq:Besselinterl} is a direct consequence of Corollary \ref{cor:interlacinglittleqJacobi} and Lemma \ref{lemma:limit} (\ref{limitinterlacing}).

On the other hand, Lemma \ref{lemma:limit} (\ref{limitlmesh}) implies that $\besselq{n}{b}{x}\in\overline{\P^q_n}((0,1))$. In other words, the logaritmic mesh of the $q$-Bessel polynomials satisfies
$$\lmesh{\left[\besselq{n}{b}{x}\right]} \leq q.$$ 
To prove \eqref{eq:Besselrealzeros}, we just need to show that this inequity is strict. 
Assume that this is false; that is, that $\lmesh{\left[\besselq{n}{b}{x}\right]}=q$. This implies that there exist two zeros, $y_0\neq 0$ and $y_1\neq 0$, of $\besselq{n}{b}{x}$ such that $y_1=qy_0$. From \cite[(14.22.3)]{koekoek2010hypergeometric}, we have the $q$-difference equation,
\begin{equation}\label{eq:qbessellemsh4}
    (cx+1)\, B\left(x\right)=b\, x\, B\left(qx\right)+(1-x)\, B\left(q^{-1}x\right),
\end{equation}
where $B(x)=\besselq{n}{b}{x}$ to simplify notation, and $c:=-q^{-n}(1-q^n)(1+bq^n)+b-1$ is a constant.

Evaluating $x=y_1=qy_0$ in \eqref{eq:qbessellemsh4} we obtain that 
$$by_1\, B\left(qy_1\right)=0.$$ 
Thus, $y_2:=q^2y_0$ is also a zero of $B$. Similarly, evaluating $x=q^2y_0$ in \eqref{eq:qbessellemsh4} we obtain that $q^3y_0$ is also a zero of $B$. Recursively, we can prove that $q^ky_0$ is a zero of $B$ for all $k\in\N$, contradicting that $B$ is a polynomial of degree $n$. This shows that $\besselq{n}{b}{x}\in\P_n^q((0,1))$, as claimed.
\end{proof}

 \section*{Acknowledgments}
The first author was partially supported by Simons Foundation Collaboration Grants for Mathematicians (grant MPS-TSM-00710499)).
He also acknowledges the support of the project PID2021-124472NB-I00, funded by MCIN/AEI/10.13039/501100011033 and by ``ERDF A way of making Europe'', as well as the support of Junta de Andaluc\'{\i}a (research group FQM-229 and Instituto Interuniversitario Carlos I de F\'{\i}sica Te\'orica y Computacional). 

The third author was partially supported by AMS-Simons travel grant. He expresses his gratitude for the warm hospitality and stimulating atmosphere at Baylor University.


\begin{thebibliography}{1}
	
	\bibitem{arizmendi2024s}
	O.~Arizmendi, K.~Fujie, D.~Perales, and Y.~Ueda.
	\newblock {S}-transform in finite free probability.
	\newblock {\em Preprint arXiv:2408.09337}, 2024.
	
	\bibitem{MR3051099}
	D.~K. Dimitrov.
	\newblock A late report on interlacing of zeros of polynomials.
	\newblock In {\em Constructive theory of functions}, pages 69--79. Prof. M.
	Drinov Acad. Publ. House, Sofia, 2012.
	
	\bibitem{gochhayat2016interlacing}
	P.~Gochhayat, K.~Jordaan, K.~Raghavendar, and A.~Swaminathan.
	\newblock Interlacing properties and bounds for zeros of ${}_2\phi_1 $
	hypergeometric and little $q$-{J}acobi polynomials.
	\newblock {\em The Ramanujan Journal}, 40:45--62, 2016.
	
	\bibitem{jordaan2009interlacing}
	K.~Jordaan and F.~To{\'o}kos.
	\newblock Interlacing theorems for the zeros of some orthogonal polynomials
	from different sequences.
	\newblock {\em Applied numerical mathematics}, 59(8):2015--2022, 2009.
	
	\bibitem{koekoek2010hypergeometric}
	R.~Koekoek, P.~Lesky, and R.~F. Swarttouw.
	\newblock {\em Hypergeometric orthogonal polynomials and their q-analogues}.
	\newblock Berlin: Springer, 2010.
	
	\bibitem{lamprecht2016suffridge}
	M.~Lamprecht.
	\newblock Suffridge's convolution theorem for polynomials and entire functions
	having only real zeros.
	\newblock {\em Advances in Mathematics}, 288:426--463, 2016.
	
	\bibitem{moak1981q}
	D.~S. Moak.
	\newblock The $q$-analogue of the {L}aguerre polynomials.
	\newblock {\em Journal of Mathematical Analysis and Applications},
	81(1):20--47, 1981.
	
	\bibitem{tcheutia2018mixed}
	D.~Tcheutia, A.~Jooste, and W.~Koepf.
	\newblock Mixed recurrence equations and interlacing properties for zeros of
	sequences of classical $q$-orthogonal polynomials.
	\newblock {\em Applied Numerical Mathematics}, 125:86--102, 2018.
	
\end{thebibliography}
\end{document}